\documentclass[11pt]{amsart}
\usepackage{amssymb}
\usepackage{amsmath}
\usepackage{color}
\usepackage[all]{xy}


\newtheorem{theorem}{Theorem}[section]
\newtheorem{lemma}[theorem]{Lemma}
\newtheorem{remark}[theorem]{Remark}

\newtheorem{definition}[theorem]{Definition}
\newtheorem{proposition}[theorem]{Proposition}

\setlength{\oddsidemargin}{0cm} \setlength{\topmargin}{0cm}
\setlength{\evensidemargin}{0cm} \setlength{\textwidth}{16cm}
\setlength{\textheight}{23cm}

\def\C{\mathbb{C}}
\def\zC{\mathbb{C}}
\def\N{\mathbb{N}}
\def\zN{\mathbb{N}}
\def\T{\mathbb{T}}

%

\begin{document}

\title[Hypercyclic behavior of some non-convolution operators]{Hypercyclic behavior of some non-convolution\\ operators on $H(\C^N)$}

\author{Santiago Muro, Dami\'an Pinasco, Mart\'in Savransky}

\thanks{Partially supported by PIP 2010-2012 GI 11220090100624, PICT 2011-1456, UBACyT 20020100100746, ANPCyT PICT 11-0738 and CONICET}

\address{Santiago Muro
\hfill\break\indent Departamento de Matem\'{a}tica - Pab I,
Facultad de Cs. Exactas y Naturales, Universidad de Buenos Aires,
(1428), Ciudad Aut\'onoma de Buenos Aires, Argentina and CONICET} \email{{\tt smuro@dm.uba.ar}}

\address{Dami\'an Pinasco
\hfill\break\indent Departamento de Matem\'aticas y Estad\'{\i}stica,
Universidad Torcuato Di Tella, Av. Figueroa Alcorta 7350, (1428), Ciudad Aut\'onoma de Buenos Aires, Argentina and CONICET}
\email{{\tt dpinasco@utdt.edu}}

\address{Mart\'{i}n Savransky
\hfill\break\indent Departamento de Matem\'{a}tica - Pab I,
Facultad de Cs. Exactas y Naturales, Universidad de Buenos Aires,
(1428), Ciudad Aut\'onoma de Buenos Aires, Argentina and CONICET} \email{{\tt msavran@dm.uba.ar}}

\subjclass[2010]{47A16, 32Axx}

\begin{abstract}
We study hypercyclicity properties of a family of non-convolution operators defined on spaces of holomorphic functions on $\C^N$.
These operators are a composition of a differentiation operator and an affine composition operator, and
are analogues of operators studied by Aron and Markose on $H(\C)$.
The hypercyclic behavior is more involved than in the one dimensional case, and depends on several parameters involved.
\end{abstract}

 \keywords{non-convolution operators, differentiation operators, composition operators, hypercyclic
operators, strongly mixing operators}

\maketitle


\section*{Introduction}
If $T$ is a continuous linear operator acting on some topological vector space $X$, the orbit under $T$ of a vector $x\in X$ is the set
$Orb(x,T):=\{x, Tx, T^2x,\dots\}$. The operator $T$ is said to be {\it hypercyclic } if there exists some vector $x\in X$, called {\it hypercyclic vector},
whose orbit under $T$ is dense in $X$. In the Fr\'echet space setting, an operator $T$ is hypercyclic if and
only if it is topologically transitive, that is, if for every pair
of non empty open sets $U$ and $V$, there exists a integer $n_0\in\N$ such $T^{n_0}U\cap V \neq \emptyset$. An operator
is said to be mixing if $T^nU\cap V \neq \emptyset$ for every integer $n\geq n_0$. Recently, some stronger
forms of hypercyclicity have gained the attention of researchers,
specially the concepts of frequently hypercyclic operators and strongly mixing operators with respect to some invariant probability measure on the space.

The first examples of hypercyclic operators were found by Birkhoff \cite{Bir29} and MacLane \cite{Mac52},
whose research was focused in holomorphic functions of one complex variable and not in properties of
operators. Birkhoff's result implies that the translation operator $\tau : H(\C)\to  H(\C)$ defined by $\tau(h)(z) =
h(1 + z)$ is hypercyclic. Likewise, MacLane's result states that the differentiation operator on $H(\C)$ is
hypercyclic. In a seminal paper, Godefroy and Shapiro \cite{GodSha91} unified and generalized both results,
by showing that every continuous linear operator $T : H(\C^N) \to H(\C^N)$ which commutes with translations
and which is not a multiple of the identity is hypercyclic. This operators are called non-trivial convolution operators.

Another important class of operators on $H(\C^N)$ are the composition operators $C_\phi$, induced by symbols $\phi$ which are automorphisms of $\C^N$. The hypercyclicity of composition operators induced by affine automorphisms was completely characterized in terms of properties of the symbol by Bernal-Gonz\'alez \cite{Ber05}.

Besides operators belonging to some of these two classes, there are not many examples of hypercyclic operators
on $H(\C^N)$.
Motivated by this fact, Aron and Markose \cite{AroMar04} studied the hypercyclicity of the following operator on $H(\mathbb C)$, $Tf(z)=f'(\lambda z+b)$,
with $\lambda, b\in\C$. The operator $T$ is not a convolution operator unless $\lambda= 1$.
They showed that $T$ is hypercyclic for any $|\lambda|\geq 1$ (a gap in the proof was corrected in
\cite{FerHal05}) and that it is not hypercyclic if $|\lambda|<1$ and $b=0$. Thus, they gave explicit examples
of hypercyclic operators which are neither convolution operators nor composition operators. Recently, this
operators were studied in \cite{GupMun14}, where the authors showed that the operator is frequently
hypercyclic when $b=0$, $|\lambda|\ge1$ and asked whether it is frequently hypercyclic for any $b$.
In Section 2, we give a different proof of the result of \cite{AroMar04, FerHal05}, but for any
$\lambda,b\in\mathbb C$. We conclude in Proposition \ref{nonconvolution C} that $T$ is hypercyclic
if and only if $|\lambda|\ge1$, and that in this case, $T$ is
even strongly mixing with respect to some Borel probability measure of full support on $H(\C)$.

In Sections 3 and 4 we define $N$-dimensional analogues of the operators considered by Aron and Markose  and
study the dynamics they induce in $H(\C^N)$. These operators are a composition between a partial
differentiation operator and a composition operator induced by some automorphism of $\C^N$.
It turns out that its behavior is more complicated than its one variable analogue. One possible reason is
that, while the automorphisms of $\mathbb C$ have a very simple structure and hypercyclicity properties, the
automorphisms of $\mathbb C^N$ are much more involved. Even, the characterization of hypercyclic affine
automorphisms is nontrivial (see \cite{Ber05}).

In Section 3, we consider the case in which the composition operators are induced by a diagonal operator plus a translation, that is, for $f\in H(\mathbb C^N)$ and $z=(z_1,\dots,z_N)\in\mathbb C^N$,
we study operators of the form $Tf(z)=D^\alpha f((\lambda_1z_1,\dots,\lambda_Nz_N)+b)$, where $\alpha$ is a multi-index and $b$ and $\lambda=(\lambda_1,\dots,\lambda_N)$ are vectors in $\mathbb C^N$.
In this case we completely characterize the hypercyclicity of these non-convolution operators which, contrary to the one dimensional case studied in \cite{AroMar04}, does not only depend on the size of $\lambda$.
In the last section, we study the operators which are a composition of a directional differentiation operator with a general affine automorphism of $\mathbb C^N$ and determine its hypercyclicity in some cases.


\section{Preliminaries}
In this section we state some known conditions which ensure that a linear operator is strongly mixing with
respect to an invariant Borel probability
measure of full support. First we recall the following definitions.

\begin{definition}\rm
A linear operator $T$ on $X$ is called {\it frequently hypercyclic} if there exists a vector $x\in X$, called a {\it frequently hypercyclic vector}, whose $T$-orbit visit each non-empty open set along a set of integers having positive lower density.
\end{definition}


\begin{definition}\rm
A Borel probability measure on $X$ is Gaussian if and only if it is the
distribution of an almost surely convergent random series of the form $\xi=\sum_0^\infty{g_nx_n}$, where $(x_n) \subset X$ and $(g_n)$ is a sequence of independent, standard complex Gaussian variables.
\end{definition}

\begin{definition}\rm
We say that an operator $T\in \mathcal L(X)$ is strongly mixing in the Gaussian sense if there exists some Gaussian $T$-invariant probability measure $\mu$ on $X$ with full support such that any measurable sets $A,\,B\subset X$ satisfy
$$
\lim_{n\to\infty}\mu(A\cap T^{-n}(B))=\mu(A)\mu(B).
$$
\end{definition}

We will use the following result, which is a corollary of a theorem due to Bayart and Matheron (see
\cite{BayMatSMALL}). Essentially this theorem says that a large supply of eigenvectors associated to unimodular eigenvalues that are well distributed along the unit circle implies that the operator is strongly mixing in the Gaussian sense.

\begin{theorem}[Bayart, Matheron]\label{Bayart_Matheron}
Let $X$ be a complex separable Fr\'echet space, and let $T \in \mathcal L (X)$. Assume that for any set $D\subset \T$ such that $\T \setminus D$ is dense in $\T$, the linear span of
$\bigcup_{\lambda\in\T-D} \ker(T-\lambda)$ is dense in $X$. Then $T$ is strongly mixing in the Gaussian sense.
\end{theorem}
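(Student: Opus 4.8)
The plan is to deduce this statement directly from the main result of \cite{BayMatSMALL}, so that essentially no new work is required beyond matching hypotheses. Recall that the Bayart--Matheron theorem we rely on asserts, for a complex separable Fr\'echet space $X$ and $T\in\mathcal L(X)$, that $T$ is strongly mixing in the Gaussian sense as soon as the unimodular eigenvectors of $T$ span $X$ \emph{modulo every set belonging to a suitable family of ``small'' subsets of $\T$} --- concretely, $\overline{\operatorname{span}}\bigcup_{\lambda\in\T\setminus D}\ker(T-\lambda)=X$ for every $D$ in that family --- and that the witnessing Gaussian measure is automatically $T$-invariant and of full support, which is precisely the conclusion we want. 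Thus the task reduces to checking that the hypothesis stated above is at least as strong as the hypothesis of the cited theorem, and that the cited theorem applies in the stated (Fr\'echet, separable, complex) generality.

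First I would observe that every set in the relevant family of ``small'' sets is, in particular, co-dense in $\T$: such a family consists at worst of Lebesgue-null closed sets (Dirichlet sets, sets of extended uniqueness, closed sets with empty interior, etc.), and a closed Lebesgue-null set has empty interior, hence dense complement; already the bare requirement that $D$ be countable forces $\T\setminus D$ to be dense. Consequently the hypothesis ``$\overline{\operatorname{span}}\bigcup_{\lambda\in\T\setminus D}\ker(T-\lambda)=X$ for every $D\subset\T$ with $\T\setminus D$ dense'' is formally stronger than, hence implies, ``the same holds for every $D$ in the small family''. Applying the Bayart--Matheron theorem then yields a $T$-invariant Gaussian probability measure of full support with respect to which $T$ is strongly mixing, which is a one-paragraph proof. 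I would also note that the genuine difficulty --- extracting, from the abstract spanning hypothesis, continuous $\T$-eigenvector fields and the associated Gaussian measure --- is carried out inside \cite{BayMatSMALL} and need not be revisited here.

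The point that deserves care, and the only real obstacle, is bookkeeping about the exact statement being invoked. One must confirm that the version in \cite{BayMatSMALL} (i) is stated for complex separable Fr\'echet spaces, not merely Banach spaces; (ii) delivers \emph{strong} mixing in the Gaussian sense, not just ergodicity or frequent hypercyclicity, since the strength of the dynamical conclusion is genuinely sensitive to which notion of ``small subset of $\T$'' is used; and (iii) produces an invariant Gaussian measure of \emph{full} support. If in that reference the spanning hypothesis is phrased in terms of a family narrower than ``all co-dense sets'', one simply records that each member of that family is co-dense, so the hypothesis above subsumes it; if, as I expect, a corollary with precisely the ``$\T\setminus D$ dense'' phrasing already appears there, the present statement is a verbatim citation and nothing further is needed.
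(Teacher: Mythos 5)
Your proposal is correct and matches what the paper does: the paper gives no proof of this statement, presenting it as a corollary of the Bayart--Matheron theorem in \cite{BayMatSMALL}, and your argument --- that the ``small'' sets of the circle appearing there (countable sets, sets of extended uniqueness, etc.) all have dense complement, so the hypothesis quantified over all co-dense $D$ subsumes theirs --- is exactly the hypothesis-matching that justifies calling it a corollary. The only caveats you raise (Fr\'echet rather than Banach setting, and that the conclusion is strong mixing with a full-support invariant Gaussian measure) are likewise settled by the cited reference, so nothing further is needed.
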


The following result, proved by Murillo-Arcila and Peris in \cite[Theorem 1]{MurPer13}, shows that operators
defined on Fr\'echet spaces which satisfy the Frequent Hypercyclicity Criterion are strongly mixing with respect to an
invariant Borel measure with full support.

\begin{theorem}[Murillo-Arcila, Peris]\label{murillo}
Let $X$ be a separable Fr\'echet space and $T\in\mathcal{L}(X)$. Suppose that there exists a dense subset
$X_0\subset X$ such that $\sum_n T^n x$ is unconditionally convergent for all $x\in X_0$. Suppose further that
there exists a sequence of maps $S_k:X_0\to X$ such that $T\circ S_1=Id$, $T\circ S_k=S_{k-1}$ and $\sum_k
S_k(x)$ is unconditionally convergent for all $x\in X_0$. Then there exists a Borel probability measure $\mu$
in $X$, $T$-invariant, such that the operator $T$ is strongly mixing respect to $\mu$.
\end{theorem}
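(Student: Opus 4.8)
The plan is to realize the measure $\mu$ as the law of an explicit random series, using a Poisson-point-process device to make $T$-invariance and almost sure convergence compatible. Write $T^{[n]}:=T^n$ for $n\ge 0$ and $T^{[-k]}:=S_k$ for $k\ge 1$; then $T\circ S_1=\mathrm{Id}$ and $T\circ S_k=S_{k-1}$ give $T\circ T^{[n]}=T^{[n+1]}$ on $X_0$ for every $n\in\mathbb Z$, and the two unconditional convergence hypotheses say precisely that $\{T^{[n]}x\}_{n\in\mathbb Z}$ is an unconditionally summable family in $X$ for each $x\in X_0$ (this is a form of the Frequent Hypercyclicity Criterion, under which $T$ is already known to be frequently hypercyclic; the new content is the invariant measure). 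Fix a sequence $(u_j)_{j\ge 1}\subset X_0$ dense in $X$, strictly positive weights $c_j$ to be chosen, and let $\Pi$ be a Poisson point process on $\mathbb Z\times\mathbb N\times\T$ with intensity $\big(\sum_{n\in\mathbb Z}\delta_n\big)\otimes\rho$, where $\rho=\big(\sum_j p_j\delta_j\big)\otimes(\text{uniform probability measure on }\T)$ and all $p_j>0$. Set $\mu:=\mathrm{law}(\Phi)$ with $\Phi:=\sum_{(n,j,\theta)\in\Pi}c_j\,e^{i\theta}\,T^{[n]}u_j$.

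First I would check that $\Phi$ converges almost surely. Since $\rho$ is a probability measure, each time window $\{\,|n|\le K\,\}$ contains only finitely many points of $\Pi$, so the partial sums over such windows are genuine finite sums, and they converge to $\Phi$ because the (countable) family of all terms is unconditionally summable --- which holds provided the $c_j$ decay fast enough relative to the unconditional-summability constants of $\{T^{[n]}u_j\}_n$ in the countably many seminorms of $X$, together with the It\^o--Nisio principle that unconditional convergence of $\sum v_n$ forces almost sure convergence of $\sum\varepsilon_n v_n$ for independent unimodular multipliers $\varepsilon_n$. Then $\mu$ is a Borel probability measure on the separable space $X$, and \emph{$T$-invariance} is built into the construction: by continuity of $T$, $T\Phi=\sum_{(n,j,\theta)\in\Pi}c_j\,e^{i\theta}\,T^{[n+1]}u_j$ is the same functional evaluated at the shifted process $\Pi+(1,0,0)$, which has the same law as $\Pi$ because the $\mathbb Z$-marginal of the intensity is translation invariant; hence $T\Phi$ and $\Phi$ are equidistributed and $T_\ast\mu=\mu$.

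For \emph{full support}, fix $x\in X$ and $r>0$ and condition on the finitely many points of $\Pi$ lying in a box $\{\,|n|\le K,\ j\le J\,\}$. The map sending the multiset of their phases to their contribution $\Phi^{\mathrm{box}}$ is continuous, and by taking the multiplicities at each $(n,j)$ large enough one can bring $\Phi^{\mathrm{box}}$ within $r/2$ of any prescribed element of $\mathrm{span}\{u_1,\dots,u_J\}$, in particular (for $J$ large) within $r/2$ of $x$; such configurations occur with positive probability, while the points of $\Pi$ outside the box contribute less than $r/2$ with positive probability, independently of the content of the box, again by almost sure convergence. Hence $\mu\big(B(x,r)\big)>0$.

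The \emph{strong mixing} step is the payoff and, with the invariance step, the main source of difficulty. For $n>2K$ split $\Phi=\Phi^{(1)}+\Phi^{(2)}$ according to whether a point of $\Pi$ has first coordinate in $[-K,K]$, and write $T^n\Phi=\Psi^{(1)}+\Psi^{(2)}$ where $\Psi^{(1)}$ collects the terms whose shifted first coordinate lies in $[-K,K]$, i.e.\ whose original first coordinate lies in $[-K-n,K-n]$. These two index windows are disjoint, so $\Phi^{(1)}$ and $\Psi^{(1)}$ are \emph{independent}; translation invariance of $\Pi$ gives that $\Psi^{(1)}$ has the same law as $\Phi^{(1)}$; and $\Phi^{(2)}$, $\Psi^{(2)}$ tend to $0$ in probability as $K\to\infty$, uniformly in $n$, because each is distributed like a tail of the convergent series $\Phi$. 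Letting $n\to\infty$ and then $K\to\infty$ shows that $(\Phi,T^n\Phi)$ converges in law to $\mu\otimes\mu$; testing against products of bounded Lipschitz functions and then approximating Borel sets in the standard way yields $\mu(A\cap T^{-n}B)\to\mu(A)\mu(B)$ for all measurable $A,B$. I expect the genuine obstacle to lie in the tension already visible in the invariance step: a bilateral series $\sum_{n\in\mathbb Z}(\text{i.i.d.})$ is formally $T$-invariant but never converges, whereas any weighting or truncation in the index $n$ destroys invariance; the Poisson device resolves this precisely because its intensity is infinite but translation invariant in the ``time'' variable and finite in the ``label'' variable, at the cost that essentially all the technical work is the unconditional-summability bookkeeping for $\Phi$ and its shifts (one cannot pass to absolute convergence, by Dvoretzky--Rogers).
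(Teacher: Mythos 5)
You should first note that the paper does not prove this theorem: it is quoted from Murillo--Arcila and Peris \cite{MurPer13}, whose proof pushes a Bernoulli shift forward through essentially the same device you use, namely the $\mathbb{Z}$-indexed series $(y_k)_k\mapsto\sum_k T^{[k]}y_{-k}$ with symbols $y_k$ ranging over a countable set $\{0\}\cup\{\beta_l u_l\}$ of suitably scaled members of a dense sequence of $X_0$, equipped with an i.i.d.\ (product) measure; shift invariance gives $T$-invariance, mixing of the Bernoulli shift gives mixing of the image measure, and density of the attainable finite sums gives full support. Your three soft steps (invariance, full support, mixing via disjoint index windows and asymptotic independence) follow this same architecture and are essentially sound \emph{conditional on} the almost sure convergence of $\Phi$.

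The genuine gap is exactly where you locate the technical work, and your proposed remedy (fast decay of $c_j$ plus the contraction/It\^o--Nisio principle for unimodular multipliers) does not close it. Since the intensity charges each site $(n,j)$ with a Poisson$(p_j)$ number of points, the grouped coefficient of $T^{[n]}u_j$ is $c_jW_{n,j}$, where $W_{n,j}$ is a compound Poisson sum of unimodular numbers, i.i.d.\ over $n$ and almost surely unbounded in $n$ for each fixed $j$. Unconditional convergence of $\sum_n T^{[n]}u_j$ is stable under \emph{bounded} multiplier sequences, not unbounded ones, and shrinking $c_j$ is useless because divergence of $\sum_n W_{n,j}T^{[n]}u_j$ is scale invariant. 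This is not a hypothetical loophole: take $X=c_0$, $T$ the weighted backward shift with bounded weights chosen so that $(w_1\cdots w_n)^{-1}=1/\log\log n$, $X_0$ the finitely supported sequences and $S_k$ the corresponding forward shifts; the hypotheses of the theorem hold, the terms $S_ku_j$ have sup norm of order $1/\log\log k$, and since $P\bigl(|W_{k,j}|\ge\varepsilon\log\log k\bigr)$ is of size $k^{-o(1)}$, hence not summable in $k$, the second Borel--Cantelli lemma shows the coefficients beat this decay infinitely often, so the label-$j$ subseries (hence $\Phi$) diverges almost surely for \emph{every} choice of $c_j,p_j>0$. These slowly decaying shifts are precisely the operators the theorem was designed to reach (they are frequently hypercyclic but admit no invariant Gaussian measure of full support, by an example of Bayart and Grivaux), so any input with unbounded coefficients -- Gaussian or Poisson -- is structurally doomed on them. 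The repair is to keep your scheme but feed it a bounded stationary input: an i.i.d.-over-$n$ field whose value at time $n$ is $0$ with large probability and a scaled $u_j$ (with a phase, if you like) otherwise. This is a Bernoulli shift, hence stationary and strongly mixing, the multipliers are bounded so unconditional convergence plus a suitable choice of scalings yields convergence on the support, and the rest of your argument goes through -- which is, in substance, the construction of \cite{MurPer13}.
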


It can be shown that the hypothesis of the Theorem \ref{murillo} implies the corresponding ones of the Theorem \ref{Bayart_Matheron}. So, in any case, both theorems allow us to conclude the existence of an invariant Gaussian probability measure for linear operators of full support which are strongly mixing.
Finally, the next proposition states that the existence of such measures is preserved by linear conjugation. It's proof is standard.

\begin{proposition}\label{conjugados}
Let $X$ and $Y$ be separable Fr\'echet spaces  and $T\in\mathcal{L}(X)$, $S\in\mathcal{L}(Y)$. Suppose that
$SJ=JT$ for some linear mapping $J:X\to Y$ of dense range then, if $T$ has an invariant Borel measure then so does $S$.
Moreover, if $T$ has an invariant Borel measure that is Gaussian, strongly mixing, ergodic or
of full support, then so does $S$.
\end{proposition}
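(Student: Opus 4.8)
The plan is to push forward measures along the intertwining map $J$ and verify each property in turn. Given a $T$-invariant Borel probability measure $\mu$ on $X$, I would define $\nu := J_*\mu$, the pushforward measure on $Y$, by $\nu(B) = \mu(J^{-1}(B))$ for Borel $B\subset Y$; this is well defined since $J$ is continuous, hence Borel measurable. The first thing to check is $S$-invariance: for a Borel set $B\subset Y$, the relation $SJ = JT$ gives $J^{-1}(S^{-1}(B)) = (SJ)^{-1}(B) = (JT)^{-1}(B) = T^{-1}(J^{-1}(B))$, so $\nu(S^{-1}(B)) = \mu(T^{-1}(J^{-1}(B))) = \mu(J^{-1}(B)) = \nu(B)$ by $T$-invariance of $\mu$. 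The same identity, iterated to $J^{-1}(S^{-n}(B)) = T^{-n}(J^{-1}(B))$, is what drives the remaining properties.

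Next I would treat full support: since $J$ has dense range, for any non-empty open $U\subset Y$ the preimage $J^{-1}(U)$ is a non-empty open subset of $X$ (non-empty because $\overline{J(X)}=Y$ meets $U$, open because $J$ is continuous), so $\nu(U) = \mu(J^{-1}(U)) > 0$ when $\mu$ has full support. For the strong mixing property, given Borel $A,B\subset Y$, apply the definition of strong mixing for $T$ to the Borel sets $J^{-1}(A), J^{-1}(B)\subset X$: using $J^{-1}(S^{-n}(B)) = T^{-n}(J^{-1}(B))$ we get
$$
\nu\bigl(A\cap S^{-n}(B)\bigr) = \mu\bigl(J^{-1}(A)\cap T^{-n}(J^{-1}(B))\bigr) \xrightarrow[n\to\infty]{} \mu(J^{-1}(A))\,\mu(J^{-1}(B)) = \nu(A)\,\nu(B).
$$
Ergodicity is handled similarly: an $S$-invariant Borel set $B$ pulls back to a $T$-invariant set $J^{-1}(B)$ (again by the intertwining identity), which has $\mu$-measure $0$ or $1$ by ergodicity of $\mu$, whence $\nu(B)\in\{0,1\}$.

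The one property requiring slightly more care is preservation of Gaussianity, since the Gaussian property is defined through a representation as an almost surely convergent random series rather than as a property of the measure in isolation. If $\mu$ is the distribution of $\xi = \sum_{n} g_n x_n$ with $(x_n)\subset X$ and $(g_n)$ i.i.d.\ standard complex Gaussian, then since $J$ is continuous and linear the series $J\xi = \sum_n g_n\, Jx_n$ converges almost surely in $Y$, and its distribution is precisely $J_*\mu = \nu$; thus $\nu$ is Gaussian with defining sequence $(Jx_n)\subset Y$. I expect this Gaussian step to be the only place where one genuinely uses the linear (not merely measurable) structure of $J$ and the Fréchet-space setting for the convergence of the image series; everything else is a routine transport-of-structure argument via the single identity $J^{-1}\circ S^{-n} = T^{-n}\circ J^{-1}$. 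Finally, normalization $\nu(Y)=\mu(X)=1$ is immediate, so $\nu$ is indeed a Borel probability measure on $Y$ with all the inherited properties.
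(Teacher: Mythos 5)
Your pushforward argument ($\nu := J_*\mu$, with invariance, mixing and ergodicity transported via the identity $J^{-1}\circ S^{-n}=T^{-n}\circ J^{-1}$, full support from the dense range, and Gaussianity by mapping the random series $\sum_n g_n x_n$ to $\sum_n g_n Jx_n$) is correct and is precisely the standard proof the paper alludes to but omits ("its proof is standard"). The only caveat is that you use continuity of $J$ — needed both for Borel measurability of the pushforward and for the almost sure convergence of the image series — which the statement does not list explicitly but is clearly intended and holds in every application the paper makes of the proposition.
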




\section{Non-Convolution operators on $H(\C)$}


Let us denote by $D$ and $\tau_a$ the derivation and translation operators on $H(\C)$, respectively. Namely, for an entire function $f$, we have
$$
D(f)(z)=f'(z) \;\; \text{ and } \;\; \tau_a(f)(z)=f(z+a).
$$
MacLane's theorem \cite{Mac52} says that $D$ is a hypercyclic operator, and
Birkhoff's theorem \cite{Bir29} states that $\tau_a$ is hypercyclic provided that $a\neq 0$.
The translation operators is a special class of composition operators on $H(\C)$. By a composition
operator we mean an operator $C_\phi$ such that $C_\phi(f)= f\circ \phi$, where $\phi$ is some automorphism of $\C$. The hypercyclicity of the composition operators on $H(\C)$ has been completely characterized in terms of
properties of the symbol function $\phi$. Precisely, the relevant property of $\phi$ is the following.

\begin{definition}\label{runaway}
A sequence $\{\phi_n\}_{n\in\N}$ of holomorphic maps on $\C$, is called {\it runaway} if, for each compact
set $K\subset \C$, there is an integer $n\in\N$ such that $\phi_n(K)\cap K= \emptyset$. In the case where
$\phi_n=\phi^n$ for every $n\in\mathbb N$, we will just say that $\phi$ is runaway.
\end{definition}

This definition was first given by Bernal Gonz\'alez and Montes-Rodr\'iguez in \cite{BerMon95}, where they
also proved the following (see also \cite[Therorem 4.32]{GroPer11}).
\begin{theorem}
Let $\phi$ be an automorphism of $\C$. Then $C_\phi$ is hypercyclic if and only if $\phi$ is
runaway.
\end{theorem}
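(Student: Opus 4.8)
The plan is to establish both implications directly, via the equivalence between hypercyclicity and topological transitivity recalled in the Introduction (note that $H(\C)$ is a separable Fr\'echet space, so this equivalence applies).

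For necessity I would argue by contraposition. Assume $\phi$ is not runaway and fix a compact set $K\subset\C$ with $\phi^n(K)\cap K\neq\emptyset$ for every $n\geq 1$. Consider the nonempty open sets $U=\{h\in H(\C):\sup_K|h|<1\}$ and $V=\{h\in H(\C):\sup_K|h-2|<1\}$. I expect to show $C_\phi^n(U)\cap V=\emptyset$ for all $n$: if $h\in U$ and $C_\phi^n h=h\circ\phi^n\in V$, pick $z_0\in K$ with $w:=\phi^n(z_0)\in K$; then $|h(w)|<1$ because $w\in K$ and $h\in U$, while evaluating the bound for $h\circ\phi^n$ at $z_0\in K$ yields $|h(w)-2|<1$, hence $\mathrm{Re}\,h(w)>1$ and $|h(w)|>1$ — a contradiction. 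So $C_\phi$ is not topologically transitive, hence not hypercyclic.

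For sufficiency I would use that an automorphism of $\C$ is an affine bijection $\phi(z)=az+b$ with $a\neq 0$, so every iterate $\phi^{\pm n}$ is again an affine automorphism and in particular maps closed disks onto closed disks. It suffices to check transitivity against basic open sets of the form $U=\{h:\sup_K|h-g|<\varepsilon\}$ and $V=\{h:\sup_K|h-f|<\varepsilon\}$ with $K$ a closed disk, since the Taylor polynomials are dense in $H(\C)$ and disks are cofinal among compact subsets of $\C$. Given such data, the runaway hypothesis supplies $n$ with $\phi^n(K)\cap K=\emptyset$; then $K\cup\phi^n(K)$ is a disjoint union of two closed disks, a compact set with connected complement, and the function equal to $g$ on a neighborhood of $K$ and to $f\circ\phi^{-n}$ on a neighborhood of $\phi^n(K)$ is holomorphic there. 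Runge's theorem then produces a polynomial $h$ with $\sup_K|h-g|<\varepsilon$ and $\sup_{\phi^n(K)}|h-f\circ\phi^{-n}|<\varepsilon$; the first estimate gives $h\in U$, and the second gives $|h(\phi^n(z))-f(z)|=|h(\phi^n(z))-f(\phi^{-n}(\phi^n(z)))|<\varepsilon$ for every $z\in K$, i.e. $C_\phi^n h\in V$. Hence $C_\phi^n(U)\cap V\neq\emptyset$, so $C_\phi$ is transitive and therefore hypercyclic.

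I anticipate that the only delicate step is the Runge approximation in the sufficiency direction: one must arrange that the two compact pieces $K$ and $\phi^n(K)$ together have connected complement, which is exactly why I reduce to disks and exploit that the automorphisms of $\C$ are affine (so iterates carry disks to disks) together with the runaway hypothesis (which disjoins them). Alternatively, one could first classify the automorphisms — $\phi$ is runaway precisely when it is a nontrivial translation $z\mapsto z+b$ — and invoke Birkhoff's theorem directly; but the argument above is self-contained. Everything else is routine bookkeeping.
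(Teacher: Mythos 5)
Your argument is correct, but note that the paper does not prove this statement at all: it is quoted from Bernal-Gonz\'alez and Montes-Rodr\'iguez \cite{BerMon95} (see also \cite[Theorem 4.32]{GroPer11}), so there is no internal proof to compare against. What you wrote is essentially the standard proof from that literature, and both halves check out: the necessity part correctly negates the runaway condition to produce a compact $K$ with $\phi^n(K)\cap K\neq\emptyset$ for all $n$, and the two open sets $U,V$ you build cannot be linked by any iterate (the evaluation at a point of $\phi^n(K)\cap K$ gives $|h(w)|<1$ and $\mathrm{Re}\,h(w)>1$ simultaneously), so transitivity, hence hypercyclicity, fails; the sufficiency part correctly reduces to basic neighborhoods over a common closed disk $K$, uses that iterates of an affine automorphism send disks to disks, applies the runaway hypothesis to disjoin $K$ from $\phi^n(K)$, and then invokes Runge's theorem on the union of two disjoint closed disks (whose complement is connected) to manufacture a polynomial $h\in U$ with $C_\phi^n h\in V$. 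It is worth observing that your Runge-plus-runaway transitivity scheme is exactly the technique the paper itself deploys later in several variables, via Theorem \ref{Runge}, in the proof of part $b)$ of Theorem \ref{nonconvolution CN} and in the final proposition of Section 4; so your self-contained one-variable proof is a faithful prototype of those arguments, whereas the shortcut you mention (classifying the automorphisms of $\C$, noting that runaway means a nontrivial translation, and quoting Birkhoff) is closer in spirit to how the paper treats the one-dimensional case, where it only uses the classification recorded after the theorem.
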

It is known that the automorphisms of $\C$ are given by $\phi(z)=\lambda z + b$, with $\lambda\neq 0$ and
$b\in \C$. In addition, $\phi$ is runaway if and only if $\lambda=1$ and $b\neq 0$ (see \cite[Example
4.28]{GroPer11}). This means that the hypercyclic composition operators on $H(\C)$ are exactly Birkhoff's
translation operators.

Aron and Markose in \cite{AroMar04} studied the hypercyclicity of the following operator on $H(\C)$,
$$
Tf(z)=f'(\lambda z+b),
$$
with $\lambda, b\in\C$, which is a composition of MacLane's derivation operator and a composition
operator, i.e., $T= C_\phi \circ D$ with $\phi(z)=\lambda z + b$. The main motivation for the study of this operator was the wish to
understand the behavior of a concrete operator belonging neither to the class of convolution operators nor to
the class of composition operators.
As mentioned before, in \cite{AroMar04} (see also \cite{FerHal05}) the authors proved that $T$ is hypercyclic
if $|\lambda|\geq 1$, and that it is not hypercyclic if $|\lambda|<1$ and $b=0$.

In this section we give a simple proof of the result by Aron and Markose,
for the full range on $\lambda,b$. This will allow us to illustrate some of the main ideas used in
the next section to prove the more involved $N$-variables case.

Suppose that $\lambda\neq 1$. The key observation is that $T$ is conjugate to an operator of the same type,
but with $b=0$. Indeed, define $T_0f(z)=f'(\lambda z)$, then
we have that the following diagram commutes.
$$
    \xymatrix{H(\C) \ar[r]^{T} \ar[d]_{\tau_{[\frac{b}{1-\lambda}]}} & H(\C)  \\ H(\C) \ar[r]_{T_0} & H(\C)
\ar[u]_{\tau_{[\frac{b}{\lambda-1}]}} }
    $$

Note that $\frac{b}{1-\lambda}$ is the fixed point of $\phi$. This observation will be important later.

\begin{proposition}\label{nonconvolution C}
Let $T$ be the operator defined on $H(\C)$ by $Tf(z)=f'(\lambda z+b)$. Then $T$ is hypercyclic if and only if $|\lambda|\geq 1$. In this case, $T$ is
also strongly mixing with respect to some Borel probability measure of full support on $H(\C)$.
\end{proposition}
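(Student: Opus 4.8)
The plan is to split into the two cases $|\lambda| \geq 1$ and $|\lambda| < 1$, and within the first case to reduce to $b = 0$ using the conjugacy displayed above. When $\lambda = 1$ the operator $T$ is the non-trivial convolution operator $f \mapsto Df(\cdot + b)$, which is covered by the Godefroy--Shapiro theory; moreover it satisfies the hypotheses of Theorem \ref{murillo} (take $X_0$ to be the polynomials, $S_k$ the obvious antiderivative-and-translate maps), so it is strongly mixing with respect to an invariant measure of full support. When $\lambda \neq 1$, Proposition \ref{conjugados} applied to the commuting diagram lets us assume $b = 0$, i.e.\ $Tf(z) = f'(\lambda z)$ with $|\lambda| \geq 1$; note the conjugating map $\tau_{[b/(1-\lambda)]}$ is an isomorphism of $H(\C)$, so we even get the full-support/strong-mixing conclusion transferred back.

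For $T_0 f(z) = f'(\lambda z)$ with $|\lambda| \geq 1$, I would exhibit a rich supply of eigenvectors with unimodular eigenvalues and invoke Theorem \ref{Bayart_Matheron}. The natural candidates are modifications of the exponentials: one looks for $f$ with $T_0 f = \mu f$, i.e.\ $f'(\lambda z) = \mu f(z)$. Writing $f(z) = \sum_{n\geq 0} a_n z^n$ and matching coefficients gives $(n+1)a_{n+1}\lambda^n = \mu a_n$, hence $a_{n+1} = \frac{\mu}{(n+1)\lambda^n} a_n$, so $a_n = \frac{\mu^n}{n!\,\lambda^{\binom{n}{2}}} a_0$. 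Since $|\lambda| \geq 1$, the factor $\lambda^{-\binom{n}{2}}$ only helps convergence, and this power series defines an entire function $f_\mu$ for every $\mu \in \C$; it depends holomorphically (indeed, entirely) on $\mu$. Thus $\ker(T_0 - \mu) \neq 0$ for every $\mu \in \T$, and in fact the map $\mu \mapsto f_\mu$ is a nonzero $H(\C)$-valued holomorphic function on a neighborhood of $\T$.

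The remaining point is the density hypothesis of Theorem \ref{Bayart_Matheron}: for any $D \subset \T$ with $\T \setminus D$ dense in $\T$, the span of $\{f_\mu : \mu \in \T \setminus D\}$ must be dense in $H(\C)$. This follows from a standard analyticity argument: if $u \in H(\C)'$ annihilates all $f_\mu$ with $\mu \in \T\setminus D$, then $\mu \mapsto u(f_\mu)$ is a holomorphic function on a neighborhood of $\T$ vanishing on a non-discrete set, hence $u(f_\mu) = 0$ for all $\mu$ near $\T$; in particular $\mu \mapsto u(f_\mu)$ vanishes to all orders at, say, $\mu = 0$, and since the coefficients of $f_\mu$ as a power series in $\mu$ are (up to nonzero constants) the monomials $z^n$, we get $u(z^n) = 0$ for all $n$, so $u = 0$. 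By Hahn--Banach the span is dense, and Theorem \ref{Bayart_Matheron} gives that $T_0$ is strongly mixing in the Gaussian sense, a fortiori strongly mixing with respect to an invariant Borel probability measure of full support; transferring back via Proposition \ref{conjugados} finishes the case $|\lambda| \geq 1$.

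For the converse, suppose $|\lambda| < 1$; I must show $T$ is not hypercyclic (for \emph{any} $b$, not just $b = 0$). If $\lambda \neq 1$ we may again conjugate to $b = 0$, so it suffices to treat $T_0 f(z) = f'(\lambda z)$ with $|\lambda| < 1$. The obstruction is a growth/contraction estimate: iterating, $T_0^n f(z) = \lambda^{1 + 2 + \cdots + (n-1)} f^{(n)}(\lambda^n z) = \lambda^{\binom{n}{2}} f^{(n)}(\lambda^n z)$. Fix $R > 0$; by Cauchy estimates $|f^{(n)}(w)| \leq n!\, M_f(|w|+1) $ for $|w| \leq |\lambda|^n R$, and combining with $|\lambda|^{\binom{n}{2}} n! \to 0$ super-exponentially (since $\binom{n}{2}$ grows quadratically and $|\lambda|<1$), one shows $\sup_{|z|\leq R} |T_0^n f(z)| \to 0$ as $n \to \infty$. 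Hence every orbit converges to $0$ uniformly on compacta, so no orbit can be dense and $T_0$ is not hypercyclic. The main obstacle in the whole argument is making the eigenvector-density step fully rigorous — in particular checking that $\mu \mapsto f_\mu$ is genuinely $H(\C)$-valued holomorphic (uniform convergence of the power series in $\mu$ on compact subsets of $\C$, locally uniformly in $z$) so that the annihilator argument applies; the growth estimate for the converse is routine once the identity for $T_0^n$ is recorded.
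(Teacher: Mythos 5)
Your argument is correct, but for the main case it takes a genuinely different route from the paper. For $|\lambda|\ge 1$, $\lambda\ne 1$, $b=0$, the paper applies the Murillo-Arcila--Peris criterion (Theorem \ref{murillo}) with $X_0$ the polynomials and explicit right inverses $S_n(z^k)=\frac{k!}{(k+n)!}\frac{z^{k+n}}{\lambda^{nk+n(n-1)/2}}$, checking $T_0\circ S_1=I$, $T_0\circ S_n=S_{n-1}$ and the unconditional convergence of $\sum_n S_n$; you instead solve the eigenvalue equation $f'(\lambda z)=\mu f(z)$ explicitly, getting the entire eigenfunctions $f_\mu(z)=\sum_n \frac{\mu^n}{n!\,\lambda^{n(n-1)/2}}z^n$ (convergent precisely because $|\lambda|\ge1$), note that $\mu\mapsto f_\mu$ is an $H(\C)$-valued entire map, and use the identity theorem plus Hahn--Banach to verify the spanning hypothesis of the Bayart--Matheron theorem (Theorem \ref{Bayart_Matheron}). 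Both routes yield strong mixing in the Gaussian sense and both reduce to $b=0$ via the conjugacy and Proposition \ref{conjugados}; your eigenfield argument is in fact the same strategy the paper itself uses later in Lemma \ref{nonconvolution CN lema2} for the $N$-dimensional case, and it has the small advantage of treating $|\lambda|=1$, $\lambda\ne1$ on exactly the same footing as $|\lambda|>1$ (the paper's text says ``$|\lambda|>1$'' although its estimates also cover $|\lambda|=1$). Your treatment of $|\lambda|<1$ is the paper's argument (Cauchy estimates applied to $T_0^nf(z)=\lambda^{n(n-1)/2}f^{(n)}(\lambda^nz)$), only slightly stronger since you control the whole compact set rather than just the evaluation at $0$.

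One caveat: in the case $\lambda=1$ your parenthetical claim that Theorem \ref{murillo} applies with $X_0$ the polynomials and $S_k$ ``the obvious antiderivative-and-translate maps'' is not justified for all $b$. Any such chain has $S_k(z^j)=\frac{j!}{(j+k)!}(z-kb)^{j+k}+(\text{lower order})$, and $\sup_{|z|\le R}\frac{j!}{(j+k)!}(R+k|b|)^{j+k}$ behaves like $(e|b|)^{j+k}$ for large $k$, so $\sum_k S_k(z^j)$ diverges once $|b|>1/e$. This does not affect your conclusion, because (as the paper does) the case $\lambda=1$ is a non-trivial convolution operator and strong mixing in the Gaussian sense follows from the extensions of the Godefroy--Shapiro theorem cited there (or, alternatively, from your own eigenvector argument run with the exponentials $e^{\gamma z}$, which are eigenvectors of $f\mapsto f'(\cdot+b)$ with eigenvalue $\gamma e^{\gamma b}$); just drop the appeal to Theorem \ref{murillo} with polynomials in that case.
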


\begin{proof}
If $\lambda=1$, then $T$ is a non-trivial convolution operator, thus it is hypercyclic. Moreover, by the Godefroy and Shapiro's theorem and its extensions (see \cite{GodSha91,BonGro06,MurPinSav14}), $T$ is strongly mixing in the Gaussian sense. Hence, by Proposition \ref{conjugados}, it suffices to prove the case
$b=0$ and $\lambda\ne1$, i.e. for the operator $T_0$.

Suppose first that $|\lambda|<1$ and let $f\in H(\C)$. Note
that $T_0^nf(z)=\lambda^{\frac{n(n-1)}{2}}f^{(n)}(\lambda^n z).$
By the Cauchy's estimates we obtain that
$$
|T_0^nf(0)|\leq |\lambda|^{\frac{n(n-1)}{2}}n!\sup_{\|z\|\leq
1}|f(z)| \mathop{\longrightarrow}\limits_{n\to\infty}0.
$$
Since the evaluation at $0$ is continuous, the orbit of $f$ under $T_0$ can not be dense.

Suppose now that $|\lambda|> 1$. Let us see that we can apply the Murillo-Arcila and Peris criterion, Theorem
\ref{murillo}. Let $X_0$ be the set of all polynomials, which is dense in $H(\C)$. Then, for each polynomial
$f\in X_0$, the series $\sum_n T_0^nf$ is actually a finite sum, thus it is unconditionally convergent.

For $n\in\N$ we define a sequence of linear maps $S_n:X_0\to X$ as
$$S_n(z^k)=\frac{k!}{(k+n)!}\frac{z^{k+n}}{\lambda^{nk+\frac{n(n-1)}{2}}}.$$
It is easy to see that $S_n$ satisfy the hypothesis of Theorem \ref{murillo}.
\begin{itemize}
\item $T_0\circ S_1=I:$
$$T_0\circ S_1(z^k)=T_0\left(\frac{1}{k+1}\frac{z^{k+1}}{\lambda^{k}}\right)=z^k. $$
\item $T_0\circ S_n=S_{n-1}:$
\begin{align*}T_0\circ S_n(z^k)&=T_0\left(\frac{k!}{(k+n)!}\frac{z^{k+n}}{\lambda^{nk+\frac{n(n-1)}{2}}}\right)\\
    &=\frac{k!}{(k+n-1)!}\frac{\lambda^{k+n-1}z^{k+n-1}}{\lambda^{nk+\frac{n(n-1)}{2}}}\\
     &=\frac{k!}{(k+n-1)!}\frac{z^{k+n-1}}{\lambda^{(n-1)k+\frac{(n-1)(n-2)}{2}}}\\
     &=S_{n-1}(z^k).
\end{align*}
\item The series $\sum_n S_n(f)$ is unconditionally convergent for each $f\in X_0$. If $|z|\leq R$, we get that,
$$\sum_n|S_n(z^k)|\leq
\sum_n \frac{k!}{(k+n)!}R^{k+n}\leq k! e^R.$$
\end{itemize}
Thus, the operator $T_0$ is strongly mixing in the Gaussian sense.
\end{proof}

We can summarize the results of this section in the following table. It is worth noticing that nor the
hypercyclicity of $C_\phi$ nor the hypercyclicity of $D$ imply the hypercyclicity of $C_\phi\circ D$.

\bigskip

\begin{center}
\begin{tabular}{l|c|c|c}
                 & $\lambda <1$    & $\lambda =1$                            & $\lambda > 1$\\ \hline
$C_\phi$         & Not Hypercyclic & Hypercyclic $\Leftrightarrow$ $b\neq 0$ & Not Hypercyclic\\ \hline
$D$              & Hypercyclic     & Hypercyclic                             & Hypercyclic\\ \hline
$C_\phi \circ D$ & Not Hypercyclic & Hypercyclic                             & Hypercyclic\\
\end{tabular}
\end{center}


\section{Non-Convolution operators on $H(\C^N)$ - the diagonal case}

The operators considered in the previous section were differentiation operators followed by a composition operator.
In this section we consider $N$-dimensional analogues of those operators. First, we will be concerned with symbols $\phi:\C^N\to\C^N,$ which are diagonal affine automorphism of the form
$$
\phi(z)=\lambda z+b = (\lambda_1 z_1+b_1,\dots, \lambda_N z_N+b_N),
$$
where $\lambda, b\in\C^N$; and the differentiation operator is a partial derivative operator given by a multi-index $\alpha=(\alpha_1\dots,\alpha_N) \in \N_0^N$,
$$
D^\alpha f=\frac{\partial^{|\alpha|}f}{\partial z_1^{\alpha_1}\partial z_2^{\alpha_2}\dots \partial
z_N^{\alpha_N}}.
$$
Thus in this section $T$ will denote the operator on $H(\C^N)$ defined by
$$Tf(z)=C_\phi\circ D^\alpha(f)(z) = D^\alpha f(\lambda_1 z_1+b_1,\dots, \lambda_N z_N+b_N).$$
Note that, in the definition of $T$, we allow $\alpha$ to be zero. In this case, the operator is just a
composition  operator and its hypercyclicity is determined by the symbol $\phi$.
These symbol functions are special cases of affine automorphisms of $\C^N$. The existence of universal functions for composition operators with affine symbol on $\C^N$ has been completely characterized by Bernal-Gonzalez in \cite{Ber05}, where he proved that the hypercyclicity of the composition operator depends on whether or not the symbol is runaway. Recall that an automorphism $\varphi$ of $\mathbb C^N$ is said to be runaway if for any compact subset $K$ there is some $n\ge1$ such that $\varphi^n(K)\cap K=\emptyset$.

\begin{theorem}[Bernal-Gonz\'alez]\label{phi runaway}
Assume that $\varphi :\C^N \to\C^N$ is an affine automorphism of $\C^N$, say $\varphi(z) = Az+b$. Then, the composition operator $C_\varphi$ is hypercyclic if and only if $\varphi$ is a runaway automorphism if and only if  the vector $b$ is not in $ran(A -I)$ and $det(A) \neq 0$.
\end{theorem}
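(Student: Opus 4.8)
The plan is to prove the triple equivalence by closing a loop of implications:
$$
C_\varphi \text{ hypercyclic}\ \Longrightarrow\ \varphi \text{ has no fixed point}\ \Longleftrightarrow\ b\notin\operatorname{ran}(A-I)\ \Longrightarrow\ \varphi \text{ runaway}\ \Longrightarrow\ C_\varphi \text{ hypercyclic}.
$$
Since $\varphi$ is assumed to be an automorphism, the clause $\det(A)\neq0$ is automatic; it only records the obvious necessity that $\varphi$ be injective (if $\varphi$ is not injective, pick $z_1\neq z_2$ with $\varphi(z_1)=\varphi(z_2)$; then every $C_\varphi^n f$ lies in the proper closed subspace $\{g : g(z_1)=g(z_2)\}$, so $C_\varphi$ cannot be hypercyclic). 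I would begin with the elementary identity: $\varphi$ has a fixed point iff $b\in\operatorname{ran}(A-I)$, since $\varphi(z)=z$ reads $(A-I)z=-b$. If $\varphi$ fixes $z_0$, then $C_\varphi^n f(z_0)=f(\varphi^n(z_0))=f(z_0)$ for all $f,n$, so the orbit of any $f$ stays in the proper closed affine subspace of functions taking the value $f(z_0)$ at $z_0$; hence $C_\varphi$ is not hypercyclic. This yields the first implication.

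The key structural step is ($b\notin\operatorname{ran}(A-I)$) $\Rightarrow$ ($\varphi$ runaway). Using $\operatorname{ran}(A-I)=\bigl(\ker(A^*-I)\bigr)^{\perp}$ in $\C^N$, the hypothesis furnishes a vector $\psi$ with $A^*\psi=\psi$ and $\langle b,\psi\rangle\neq0$. From $\varphi^n(z)=A^nz+\sum_{k=0}^{n-1}A^kb$ and $\langle A^kv,\psi\rangle=\langle v,\psi\rangle$ one obtains
$$
\langle \varphi^n(z),\psi\rangle=\langle z,\psi\rangle+n\,\langle b,\psi\rangle .
$$
So for a compact $K$, putting $M:=\sup_{z\in K}|\langle z,\psi\rangle|$, whenever $n>2M/|\langle b,\psi\rangle|$ every $w\in\varphi^n(K)$ satisfies $|\langle w,\psi\rangle|>M\ge|\langle z',\psi\rangle|$ for all $z'\in K$, forcing $\varphi^n(K)\cap K=\emptyset$; thus $\varphi$ is runaway. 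Conversely, a runaway $\varphi$ has no fixed point by the argument of the previous paragraph (its fixed point would be a compact invariant set), so all three "structural" statements coincide.

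The last implication, ($\varphi$ runaway) $\Rightarrow$ ($C_\varphi$ hypercyclic), I would establish by Birkhoff transitivity, and this is where the only real work lies. Given nonempty open $U,V\subset H(\C^N)$, shrink them to basic neighbourhoods of functions $g_1,g_2$, with tolerance $\varepsilon$ and \emph{closed balls} $K_1,K_2$ as the controlling compacta. By the runaway property choose $n$ with $\varphi^n(K_1\cup K_2)\cap(K_1\cup K_2)=\emptyset$; then $K_1$ and $\varphi^n(K_2)$ are disjoint compact \emph{convex} sets (the latter is an affine image of a ball), so their images under a suitable complex‑linear functional lie in disjoint half‑planes of $\C$, and Kallin's lemma gives that $L:=K_1\cup\varphi^n(K_2)$ is polynomially convex. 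The function equal to $g_1$ near $K_1$ and to the entire function $g_2\circ\varphi^{-n}$ near $\varphi^n(K_2)$ is holomorphic on a neighbourhood of $L$, so by the Oka--Weil theorem it is uniformly approximable on $L$ by entire functions: take $f\in H(\C^N)$ close to $g_1$ on $K_1$ and close to $g_2\circ\varphi^{-n}$ on $\varphi^n(K_2)$. Then $f\in U$, while $\sup_{K_2}|C_\varphi^n f-g_2|=\sup_{\varphi^n(K_2)}|f-g_2\circ\varphi^{-n}|$ is small, so $C_\varphi^n f\in V$. Hence $C_\varphi^n(U)\cap V\neq\emptyset$, and since $H(\C^N)$ is a separable Fr\'echet space, $C_\varphi$ is hypercyclic. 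The main obstacle is precisely this step: ensuring that the two relevant compacta can be approximated \emph{simultaneously} by a single entire function, i.e.\ that their disjoint union is polynomially convex, which is exactly why one arranges $K_1$ and $K_2$ to be convex.
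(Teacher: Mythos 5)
Your proof is correct, and it follows essentially the route the paper points to: the paper does not prove Theorem \ref{phi runaway} itself (it is quoted from Bernal-Gonz\'alez \cite{Ber05}), but it notes that the proof rests on the Runge-type approximation theorem for disjoint convex compact sets (Theorem \ref{Runge}), which is exactly your key step of simultaneously approximating $g_1$ on $K_1$ and $g_2\circ\varphi^{-n}$ on $\varphi^n(K_2)$ and invoking Birkhoff transitivity. Your only real deviation is that you re-derive that approximation tool via Kallin's lemma and Oka--Weil instead of citing it, and your linear-algebra link (no fixed point $\Leftrightarrow b\notin\operatorname{ran}(A-I)$, and the $A^*$-eigenvector functional $\langle\varphi^n(z),\psi\rangle=\langle z,\psi\rangle+n\langle b,\psi\rangle$ giving the runaway property) is sound.
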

The proof of this result is based on the following $N$-variables generalization of Runge's approximation theorem, which will be useful for us later.
\begin{theorem}\label{Runge}
If $K$ and $L$ are disjoint convex compact sets in $\C^N$ and $f$ is a holomorphic function in a neighborhood of $K\cup L$, then there is a sequence of polynomials on $\C^N$ that approximate $f$ uniformly on $K\cup L$.
\end{theorem}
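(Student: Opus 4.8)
The plan is to reduce the statement to two classical ingredients --- the one--variable Runge theorem and polynomial approximation on a single convex compact subset of $\C^N$ (the Oka--Weil theorem) --- the bridge between them being a polynomial ``cut--off'' manufactured from a separating complex--linear functional. First I would separate $K$ and $L$: since they are disjoint, compact and convex, the Hahn--Banach separation theorem gives a nonzero $\R$--linear functional $u\colon \C^N\to\R$ and constants $c\in\R$, $\delta>0$ with $u\le c-\delta$ on $K$ and $u\ge c+\delta$ on $L$. Writing $u=\mathrm{Re}\,\ell$ for the (unique) $\C$--linear functional $\ell(z)=u(z)-iu(iz)$, the images $\ell(K)$ and $\ell(L)$ are disjoint compact convex subsets of $\C$ lying in the half--planes $\{\mathrm{Re}\,w\le c-\delta\}$ and $\{\mathrm{Re}\,w\ge c+\delta\}$ respectively. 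Being two disjoint compact convex sets on opposite sides of a line, $\C\setminus(\ell(K)\cup\ell(L))$ is connected, so by the one--variable Runge theorem the function that equals $1$ on a neighbourhood of $\ell(K)$ and $0$ on a neighbourhood of $\ell(L)$ is a uniform limit on $\ell(K)\cup\ell(L)$ of polynomials $r_n\in\C[w]$. Setting $P_n:=r_n\circ\ell$, these are polynomials on $\C^N$ with $P_n\to 1$ uniformly on $K$ and $P_n\to 0$ uniformly on $L$.

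Next I would approximate $f$ on each piece. Shrinking the given neighbourhood of $K\cup L$ we may assume $f$ is holomorphic on a disjoint union of convex open sets containing $K$ and $L$ (a convex compact set has a neighbourhood basis of convex open sets). Since $K$ and $L$ are convex compact sets, hence polynomially convex, the Oka--Weil theorem (equivalently, Runge's theorem on convex compact subsets of $\C^N$, obtained from Cauchy's formula) provides, for each $\varepsilon>0$, polynomials $q_K,q_L$ with $\sup_K|q_K-f|<\varepsilon$ and $\sup_L|q_L-f|<\varepsilon$. Then I would glue: put $Q_n:=P_n q_K+(1-P_n)q_L$, again a polynomial on $\C^N$. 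On $K$ one has $Q_n-q_K=(1-P_n)(q_L-q_K)$, and on $L$ one has $Q_n-q_L=P_n(q_K-q_L)$; since $q_K-q_L$ is bounded on the compact set $K\cup L$ while $\sup_K|1-P_n|\to 0$ and $\sup_L|P_n|\to 0$, for $n$ large enough $\sup_{K\cup L}|Q_n-f|<2\varepsilon$. As $\varepsilon$ is arbitrary, a diagonal choice over $\varepsilon=1/k$ produces the desired sequence of polynomials, and the theorem follows.

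The only non--routine point is the construction of the cut--off $P_n$: the key observation is that a single separating complex--linear functional lets one transport the one--variable Runge theorem to $\C^N$ and so produce a genuine polynomial that is close to $1$ on $K$ and close to $0$ on $L$, which is precisely what is needed to patch the two local approximations into one. (Phrased differently, this step is equivalent to showing that $K\cup L$ is polynomially convex and then invoking Oka--Weil on $K\cup L$ directly; the cut--off argument is just a convenient way to make that self--contained.) Everything else --- the separation, the neighbourhood shrinking, and the final estimate --- is routine bookkeeping.
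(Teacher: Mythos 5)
Your argument is correct. Note, however, that the paper does not prove Theorem \ref{Runge} at all: it is quoted as a known $N$-variable Runge-type theorem (it underlies Bernal-Gonz\'alez's characterization in \cite{Ber05}), where the standard route is to observe that the union of two disjoint compact convex sets is polynomially convex and then invoke the Oka--Weil theorem on $K\cup L$ directly --- exactly the alternative you mention in your closing parenthesis. What you propose instead is a self-contained patching argument: Hahn--Banach separation yields a $\C$-linear functional $\ell$ with $\ell(K)$ and $\ell(L)$ compact, convex and lying in opposite half-planes; the one-variable Runge theorem applied to the locally constant function on disjoint neighbourhoods of $\ell(K)$ and $\ell(L)$ produces polynomial cut-offs $P_n=r_n\circ\ell$; and Oka--Weil on each convex piece gives $q_K,q_L$, which are glued by $Q_n=P_nq_K+(1-P_n)q_L$ with the elementary estimates you state. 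This buys a proof that only uses one-variable Runge plus approximation on a single convex compact set, at the cost of a slightly longer bookkeeping; the literature route is shorter but leans on the polynomial convexity of the union (Kallin-type argument). Two small points worth one line each if you write this up: justify that $\C\setminus(\ell(K)\cup\ell(L))$ is connected (any point in the complement can be joined to the separating strip without meeting either convex set, since each set has connected complement inside the half-plane containing it), and note that you do not actually need the two shrunken neighbourhoods of $K$ and $L$ to be disjoint --- holomorphy of $f$ near each set separately is all that the two applications of Oka--Weil require.
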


\begin{remark}\rm
It is easy to prove that the mapping $\phi(z)=(\lambda_1z_1+b_1,\dots,\lambda_Nz_N+b_N)$ is runaway if and only if
some coordinate is a translation, that is, for some $i=1,\dots,N$ we have, simultaneously, that $\lambda_i=1$ and $b_i\neq 0$.
\end{remark}

If $\lambda_j=0$ for some $j$, then we have that the differential $d(T^nf)(e_j)=1$, for every $n \in \zN$.
Since, the application $d(\cdot)(e_j)$ is continuous, we conclude that the orbit of $f$ under $T$ can not be
dense.



The next result completely characterizes  the hypercyclicity of the operator $Tf=C_\phi\circ D^\alpha f$, with
$\lambda \neq 0$ and $\alpha\neq 0$ (the case $\alpha=0$ is covered in \cite{Ber05}, and as mentioned above
$T$ is not hypercyclic if $\lambda_j=0$ for some $j$). Write $\lambda^\alpha=\prod_{i\leq N}
\lambda_i^{\alpha_i}$.

\begin{theorem}\label{nonconvolution CN}
  Let $T$ be the operator on $H(\C^N)$, defined by $Tf(z)=C_\phi\circ D^\alpha f(z)$, where $\alpha\neq 0$, $\phi(z)=(\lambda_1z_1+b_1,\dots,\lambda_Nz_N+b_N)$ and $\lambda_i\neq 0$ for all $i$, $1\le i\le N$. Then,

  $a)$ If $|\lambda^\alpha|\ge1$ then $T$ is strongly mixing in the Gaussian sense.

  $b)$ If for some $i=1,\dots,N$ we have that $b_i\ne0$ and $\lambda_i=1$, then $T$ is mixing.

  $c)$ In any other case, $T$ is not hypercyclic.
\end{theorem}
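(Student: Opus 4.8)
The plan is to treat the three cases separately, reducing as much as possible to the one-dimensional situation via tensor arguments and the conjugacy trick already used for Proposition \ref{nonconvolution C}.

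For part $(a)$, I would apply the Bayart--Matheron theorem (Theorem \ref{Bayart_Matheron}): it suffices to exhibit, for each $\lambda\in\T$ outside any prescribed set $D$ whose complement is dense, an explicit eigenvector of $T$ with eigenvalue $\lambda$, and to show that these eigenvectors span a dense subspace. The natural candidates are exponential-type functions $f_w(z)=e^{\langle w,z\rangle}$ or, better adapted to the affine symbol, functions of the form $f_w(z)=e^{\langle w, z - c\rangle}$ where $c$ is a fixed point of $\phi$ (so that translating by $c$ conjugates $T$ to the $b=0$ operator, exactly as in Section 2). Computing $T$ on such exponentials: $D^\alpha f_w = w^\alpha f_w$ and $C_\phi f_w(z)=e^{\langle w,\lambda z + b - c\rangle}=e^{\langle \lambda^{\mathrm{diag}}w, z-c\rangle}$ after absorbing the fixed point, so $T$ sends the exponential with frequency $w$ to a multiple of the exponential with frequency $(\lambda_1 w_1,\dots,\lambda_N w_N)$. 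Hence a genuine eigenvector requires a $w$ with $\lambda_i w_i = w_i$ for all $i$, which forces $w_i=0$ whenever $\lambda_i\neq 1$ — too restrictive. The correct construction instead uses parametrized curves of exponentials: define, for a suitable entire function $\Phi$ of one parameter $t$, the vector $g_t = \int$ or rather a carefully chosen $f_{w(t)}$ built so that $Tg_t = e^{i\theta(t)} g_t$; concretely one looks for eigenvectors of the form $f(z)=\prod_i h_i(z_i)$ where each $h_i$ is an eigenvector-type function for the one-variable operator $f\mapsto \partial^{\alpha_i} f(\lambda_i z_i + b_i)$ scaled so the product eigenvalue ranges over a dense subset of $\T$. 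I expect the cleanest route is: first show $T$ is linearly conjugate (Proposition \ref{conjugados}) to the operator $T_0 f(z) = D^\alpha f(\lambda_1 z_1,\dots,\lambda_N z_N)$ by translating by the fixed point $(b_1/(1-\lambda_1),\dots)$ in those coordinates with $\lambda_i\neq 1$ (the coordinates with $\lambda_i=1$, $b_i=0$ being harmless, and if some $\lambda_i=1$, $b_i\neq 0$ we are in case $(b)$ instead), and then for $T_0$ verify the eigenvector condition directly. For $T_0$, $f_w$ is an eigenvector iff $\lambda_i w_i = w_i$ for all $i$; to get around this one uses that $T_0$ also acts nicely on monomials times exponentials, or one simply invokes the Murillo--Arcila--Peris criterion (Theorem \ref{murillo}) as in Section 2: take $X_0$ = polynomials, note $\sum_n T_0^n f$ is a finite sum for polynomial $f$, and define right inverses $S_n$ on monomials by the $N$-dimensional analogue of the formula $S_n(z^k) = \frac{k!}{(k+n\alpha)!}\frac{z^{k+n\alpha}}{(\lambda^\alpha)^{\,n\langle\text{stuff}\rangle}}$, with the exponent of $\lambda^\alpha$ chosen to match the scaling picked up under $n$-fold application of $T_0$; unconditional convergence of $\sum_n S_n(f)$ then follows from a crude factorial estimate exactly as before, using $|\lambda^\alpha|\geq 1$. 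This is the approach I would actually write, since it parallels Section 2 verbatim and sidesteps the delicate eigenvector distribution argument.

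For part $(b)$, suppose $\lambda_{i_0}=1$ and $b_{i_0}\neq 0$. The idea is that the $i_0$-th coordinate already carries a hypercyclic (indeed mixing) translation-type dynamics, and one wants to promote this to mixing for the full operator on $H(\C^N)$. I would use Runge's approximation theorem in $N$ variables (Theorem \ref{Runge}) together with a ``moving compact sets'' argument: given compact convex $K,L\subset\C^N$ and $f,g$ holomorphic near them, one wants, for all large $n$, a function $h$ with $h\approx f$ on $K$ and $T^n h\approx g$ on $L$. Here the relevant point is that $\phi^n(z) = (\lambda_1^n z_1 + \ast,\dots)$ with the $i_0$-th coordinate shifted by $n b_{i_0}$, so $\phi^n(L)$ runs off to infinity in the $i_0$-direction while possibly contracting or expanding in the other directions. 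One splits $h$ as a sum: a polynomial part handling $K$ (where $T^n$ of a polynomial is eventually $0$ if $|\alpha|$-th derivatives kill it — but $\alpha\neq 0$ so iterating $D^\alpha$ does eventually annihilate any fixed polynomial, which is the key mechanism) plus a correction supported near the far-away set $\phi^{-n}$ of $L$ approximated via Runge, pulled back and $\alpha$-fold antidifferentiated. The main technical care is that in coordinates with $|\lambda_i|<1$ the set $\phi^n(L)$ may not escape, so disjointness of the relevant compacts has to be arranged using only the $i_0$-th coordinate, where the pure translation guarantees $\phi^n(L)\cap K=\emptyset$ for $n$ large regardless of the other coordinates. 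This coordinate-wise escape, combined with the finiteness of $\{T^n p: n\geq 0\}$ for polynomials $p$, should give the mixing property; I'd also want to double check whether one can upgrade ``mixing'' to ``strongly mixing in the Gaussian sense'' here, but as stated the theorem only claims mixing, so a direct topologically-mixing / transitivity verification suffices.

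For part $(c)$, assume $|\lambda^\alpha|<1$ and that no coordinate is a nontrivial translation (so every $i$ with $\lambda_i=1$ has $b_i=0$, and hence $T$ is conjugate to $T_0$ with $b=0$ in exactly the $\lambda_i\neq 1$ coordinates). Then I compute $T_0^n f$ explicitly: iterating $D^\alpha$ and the diagonal scaling $n$ times yields $T_0^n f(z) = (\lambda^\alpha)^{\,n(n-1)/2}\cdot(\text{monomial in }\lambda\text{'s})\cdot (D^{\alpha})^{n} f(\lambda_1^n z_1,\dots,\lambda_N^n z_N)$, and evaluating at $z=0$, by the $N$-dimensional Cauchy estimates, $|T_0^n f(0)| \leq |\lambda^\alpha|^{\,n(n-1)/2}\,(n\alpha)!\,\sup_{\|z\|\leq 1}|f(z)| \to 0$ since the factor $|\lambda^\alpha|^{n(n-1)/2}$ decays superexponentially and beats the factorial growth. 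Since evaluation at $0$ is a continuous functional, the orbit of any $f$ cannot be dense, so $T_0$ — hence $T$ — is not hypercyclic. One subtlety to address: if $|\lambda^\alpha|=1$ but not $\geq 1$ we'd be outside case $(a)$, but $|\lambda^\alpha|=1$ is included in $(a)$, so the trichotomy is genuinely exhaustive once we note that the alternative to $(a)$ is $|\lambda^\alpha|<1$, and within that either case $(b)$ holds or we are in the conjugate-to-$b=0$ situation of $(c)$.

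I expect part $(b)$ — the mixing statement via the moving-compacts Runge argument, handling coordinates with $|\lambda_i|<1$ that do not escape — to be the main obstacle; parts $(a)$ and $(c)$ are close adaptations of the one-variable proof in Section 2.
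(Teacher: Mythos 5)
Your parts $(b)$ and $(c)$ are essentially the paper's arguments: $(c)$ is exactly the evaluation at the fixed point $\frac{b}{1-\lambda}$ plus Cauchy estimates, where $|\lambda^\alpha|^{n(n-1)/2}$ beats $(n\alpha)!$; and $(b)$ is the paper's Runge/moving-compacts proof, with the escape obtained solely from the translation coordinate and a right inverse $S=I^\alpha\circ C_{\phi^{-1}}$ on polynomials (one correction: since $T^n h(z)=\lambda^{\frac{n(n-1)}{2}\alpha}D^{n\alpha}h(\phi^n(z))$, the piece approximating $S^n g$ must be placed near the \emph{forward} images $\phi^n(L)$, not $\phi^{-n}(L)$, and the quantitative point is to approximate $S^ng$ there to accuracy $\theta/K_n$, where $K_n$ is the iterated Cauchy constant; the eventual annihilation of polynomials by $T^n$ plays no role).

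The genuine gap is in part $(a)$. Cases $(a)$ and $(b)$ overlap: $(a)$ asserts strong mixing in the Gaussian sense for every $|\lambda^\alpha|\ge 1$, \emph{including} operators having a translation coordinate, e.g. $Tf(z)=\frac{\partial f}{\partial z_2}(z_1+1,2z_2)$, where $|\lambda^\alpha|=2$ but $\phi$ has no fixed point. Your reduction "translate by the fixed point, and if some $\lambda_i=1$, $b_i\neq0$ we are in case $(b)$ instead" therefore does not prove $(a)$: case $(b)$ only yields topological mixing, which is strictly weaker than the measure-theoretic conclusion claimed in $(a)$. Moreover the fallback you rely on (Murillo--Arcila--Peris with $X_0=$ polynomials) cannot be pushed through in the presence of a genuine translation: the right inverses involve $n$-fold antiderivatives composed with $\phi^{-n}$, producing terms like $(z_l-nb_l)^n/n!$ whose sup on a fixed polydisc grows, so $\sum_n S_n g$ fails to converge on polynomials (just as it would for Birkhoff's translation itself). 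The paper handles exactly this situation by splitting the coordinates into the translation block ($\lambda_i=1$, $b_i$ arbitrary) and the rest, conjugating away only the $b_i$ with $\lambda_i\neq1$, and replacing polynomials by the span of the functions $e^{\gamma_1z_1+\dots+\gamma_jz_j}z^\beta$ (exponentials in the translation variables, monomials in the others): when $\alpha$ differentiates some non-translation variable one applies Theorem \ref{murillo} to this larger dense set, and when $\alpha$ is supported entirely in the translation variables these functions are eigenvectors and one applies Theorem \ref{Bayart_Matheron}. Your proof covers only the sub-case of $(a)$ in which every coordinate with $\lambda_i=1$ has $b_i=0$; the mixed case needs the exponential-times-monomial construction, which is absent from the proposal.
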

%
%
%
%
%

\begin{remark}\rm
The item \textit{c)} above includes the following cases:

$c-i)$  $|\lambda^\alpha|<1$ and $b=0$.

$c-ii)$ $|\lambda^\alpha|<1$ and $\lambda_i\ne1$ for every $i$, $1\le i\le N$.

$c-iii)$ $|\lambda^\alpha|<1$ and $b_i=0$ for every $i$ such that $\lambda_i=1$.

In all three cases we have that the application $\phi(z)=\lambda z+b$ has a fixed point and thus $\phi$ is not
runaway. Also in case $b)$ the application $\phi$ has one coordinate which is a
translation, thus it is runaway. So, in particular, Theorem \ref{nonconvolution CN} implies that $T=C_\phi\circ
D^\alpha$ is hypercyclic if and only if either $|\lambda^\alpha| \geq1$ or $\phi$ is runaway.
\end{remark}

We can summarize our main theorem in the following table.

\bigskip

\begin{center}
\begin{tabular}{l|c|c|c}
                 & $|\lambda^\alpha| <1$ and & $|\lambda^\alpha| <1$ and & $|\lambda^\alpha| \geq1$  \\
								 & no coord. of $\phi$ is a translation & a coord. of $\phi$ is a translation    & \\ \hline
$C_\phi$         & Not Hypercyclic & Hypercyclic & depends on $\phi$\\ \hline
$D^\alpha$              & Hypercyclic     & Hypercyclic                      & Hypercyclic\\ \hline
$C_\phi \circ D^\alpha$ & Not Hypercyclic & Hypercyclic                & Hypercyclic\\
\end{tabular}
\end{center}

\bigskip

We will divide the proof of part (a) of Theorem \ref{nonconvolution CN} in two lemmas. Through a change in the
order of the variables, we may suppose that the first $j$ variables, $0\le j\le N$, correspond to the
coordinates in which $\lambda_i=1$. The operator $T$ is then of the form
\begin{equation}\label{algunos 1}
Tf(z)=D^\alpha f(z_1+b_1,\dots, z_j+b_j, \lambda_{j+1}z_{j+1}+b_{j+1}, \dots, \lambda_Nz_N+b_N).
\end{equation}
Moreover, we can assume that $b_i=0$ for all $i>j$, because $T$ is topologically conjugate to
\begin{equation}\label{algunos 1 ya trasladado}
T_0f(z)=D^\alpha f(z_1+b_1,\dots, z_j+b_j, \lambda_{j+1}z_{j+1}, \dots, \lambda_Nz_N).
\end{equation}
through a translation. Indeed,  defining $c\in\C^N$ by
$c_l=0$ if $l\leq j,$ and $c_l=\frac{b_l}{1-\lambda_l}$ if $l>j$, we get that $T_0\circ\tau_{c}=\tau_{c}\circ T$.

We first study the case in which for some $i$, we have $\lambda_i\neq 1$ and $\alpha_i\neq 0$ (note that if all $\lambda_i=1$, then $T$ is a convolution operator and it is thus strongly mixing in the Gaussian sense \cite{BonGro06,MurPinSav14}).


\begin{lemma}\label{nonconvolution CN lema1}
Let $T$ be as in (\ref{algunos 1}). Suppose that $|\lambda^\alpha|\geq 1$ and $\alpha_i\neq0$ for some $i>j$. Then $T$ is strongly mixing in the Gaussian sense.
\end{lemma}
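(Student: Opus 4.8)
The plan is to verify the hypotheses of the Bayart--Matheron criterion, Theorem \ref{Bayart_Matheron}: I will exhibit, for each unimodular $w$ in an appropriate full subset of $\T$, an eigenvector of $T$ associated to the eigenvalue $w$, and show that the span of these eigenvectors is dense in $H(\C^N)$. The natural candidates are exponential-type functions. Indeed, for a linear functional $\langle a, \cdot\rangle$ on $\C^N$, the function $e_a(z) = e^{\langle a, z\rangle}$ satisfies $D^\alpha e_a = a^\alpha e_a$, and applying the composition with $\phi$ (written in the normalized form \eqref{algunos 1 ya trasladado}, which is legitimate by Proposition \ref{conjugados}) one computes
$$
T_0 e_a(z) = a^\alpha \exp\Big(\sum_{l\le j} a_l(z_l+b_l) + \sum_{l>j}a_l\lambda_l z_l\Big).
$$
So $T_0 e_a = w\, e_{\tilde a}$, where $\tilde a_l = a_l$ for $l\le j$ and $\tilde a_l = \lambda_l a_l$ for $l>j$, and $w = a^\alpha e^{\langle a', b'\rangle}$ with $a'$, $b'$ the first-$j$ blocks. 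Thus $e_a$ is an eigenvector precisely when the ``first-$j$'' coordinates of $a$ are $0$ (so that $\tilde a = a$ is forced there) — no wait, more carefully: we need $\tilde a = a$, i.e.\ $(\lambda_l-1)a_l = 0$ for $l>j$, which since $\lambda_l\ne 1$ forces $a_l=0$ for all $l>j$ with... hmm, that kills the variable where $\alpha_i\ne 0$. So a single exponential will not be an eigenvector; instead I will look for eigenvectors of the form $p(z)\,e_a(z)$ with $p$ a polynomial, or — cleaner — build them as infinite series adapted to the scaling.

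The cleaner route: fix the coordinate $i>j$ with $\alpha_i\ne 0$ and $|\lambda_i|$-behavior controlling things. Since $|\lambda^\alpha|\ge 1$, I will look for eigenvectors that are (entire) functions of $z_i$ alone times exponentials in the remaining variables, reducing to a one-variable problem of the kind already treated: an operator $g\mapsto c\, g^{(\alpha_i)}(\lambda_i z_i)$ on $H(\C)$, and I seek its eigenfunctions for unimodular eigenvalues. Writing $g = \sum_k c_k z_i^k$, the eigenvalue equation becomes a recursion $c_{k}$ in terms of $c_{k+\alpha_i}$ (or vice versa) with ratio involving $\lambda_i^{-k}$; because $|\lambda_i|\ge 1$ along the relevant chain (guaranteed by $|\lambda^\alpha|\ge 1$ together with $\alpha_i\ne0$, after possibly absorbing the other $\lambda_l^{\alpha_l}$ into the constant), the resulting power series has infinite radius of convergence and defines an entire eigenfunction $g_w$ for every $w$ in a suitable subset of $\C$, in particular for a full subset of $\T$. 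Taking products $g_w(z_i)\cdot \prod_{l\ne i}(\text{exponential or monomial in }z_l)$ and tuning the free parameters yields, for each $w$ in a dense-complement subset $D^c$ of $\T$, a rich family of eigenvectors of $T_0$ with eigenvalue $w$.

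The final step is the density of the span of $\bigcup_{w\in\T\setminus D}\ker(T_0 - w)$ for every $D$ with dense complement. Here I would use the standard argument: a continuous linear functional $\Lambda$ on $H(\C^N)$ annihilating all these eigenvectors gives rise, via the Borel transform / duality with entire functions of exponential type, to an entire function (of the several complex parameters indexing the eigenvectors) that vanishes on a set with an accumulation point in each variable, hence vanishes identically, forcing $\Lambda = 0$; then Hahn--Banach gives density. Concretely, the parameters $a_l$ ($l\ne i$) range over open subsets of $\C$ and the map $a\mapsto \Lambda(\text{eigenvector})$ is holomorphic and vanishes on an open set, so it vanishes everywhere, and letting the parameters degenerate recovers all exponentials $e_a$, whose span is dense in $H(\C^N)$ (Hahn--Banach / the fact that finite exponential sums are dense).

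The main obstacle I expect is the bookkeeping in the second paragraph: arranging the product-form eigenvectors so that (i) the eigenvalue $w$ sweeps out a set whose complement is dense in $\T$ no matter which ``bad'' set $D$ was prescribed, and (ii) enough free holomorphic parameters survive to make the density argument go through. The inequality $|\lambda^\alpha|\ge 1$ must be used exactly to guarantee convergence of the series defining $g_w$ (this is the several-variable analogue of the computation $T_0^n f(z) = \lambda^{n(n-1)/2} f^{(n)}(\lambda^n z)$ in Proposition \ref{nonconvolution C}), and the hypothesis $\alpha_i\ne 0$ for some $i>j$ is what makes the derivative nontrivial in a genuinely ``scaled'' variable, so that the eigenvalue modulus can be adjusted to be $1$. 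Once these are set up, verifying $T_0(p\cdot e_a) = w\,(p\cdot e_a)$ is a routine Leibniz-rule computation and the density is the standard duality argument.
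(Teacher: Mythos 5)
There is a genuine gap, and it sits exactly where you flagged ``bookkeeping'': your ansatz of an eigenvector of the form (entire function of $z_i$ alone) $\times$ (exponentials in the remaining variables) produces, in general, \emph{no} eigenvectors with nonzero eigenvalue. In a scaled coordinate $l>j$ (where $\lambda_l\neq 1$) an exponential factor $e^{a_l z_l}$ is compatible with $z_l\mapsto\lambda_l z_l$ only if $a_l=0$, and if moreover $\alpha_l\neq 0$ the derivative $\partial_{z_l}^{\alpha_l}$ then annihilates the function. So whenever $\alpha$ has two or more nonzero entries in the block $l>j$, the separated form cannot work. Concretely take $N=2$, $j=0$, $b=0$, $\alpha=(1,1)$, $\lambda=(\tfrac12,4)$: then $|\lambda^\alpha|=2\ge1$ and the lemma applies, but there is no entire $f(z)=h(z_1)g(z_2)$ (in particular none with $h=e^{a_1z_1}$) satisfying $\partial_1\partial_2 f(z_1/2,4z_2)=w f(z)$ with $w\neq0$: separation forces $h'(z_1/2)=c\,h(z_1)$, and for $|\lambda_1|<1$ the coefficient recursion $a_{k+1}=c\,a_k/\bigl((k+1)\lambda_1^{k}\bigr)$ gives $a_k=c^k a_0/\bigl(k!\,\lambda_1^{k(k-1)/2}\bigr)$, a series of radius of convergence $0$ unless $c=0$; and $c=0$ forces $h$ constant, hence $Tf=0$. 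This also shows that your parenthetical ``after possibly absorbing the other $\lambda_l^{\alpha_l}$ into the constant'' cannot rescue the argument: a fixed multiplicative constant only rescales the eigenvalue and contributes exponentially in $k$, while the obstruction $\lambda_l^{-k(k-1)/2}$ grows super-exponentially; the product hypothesis $|\lambda^\alpha|\ge1$ simply does not control the individual $|\lambda_l|$'s that govern each one-variable series. (Your reduction is fine only in the special case $\alpha_l=0$ for all $l>j$ except the single $i$, since then $|\lambda^\alpha|\ge1$ does force $|\lambda_i|\ge1$.)

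The fix requires running the recursion simultaneously in \emph{all} the scaled, differentiated variables, i.e.\ along the lattice ray $\beta\mapsto\beta+n\alpha_{(2)}$ with $\alpha_{(2)}=(\alpha_{j+1},\dots,\alpha_N)$: eigenvector candidates are series of the form $\sum_{n\ge0} w^{n}\,\frac{\beta!}{\lambda^{n\beta+\binom{n}{2}\alpha_{(2)}}(\beta+n\alpha_{(2)})!}\,e_\gamma z^{\beta+n\alpha_{(2)}}$ (up to the $\gamma$- and $b$-dependent constants), whose convergence is exactly where $|\lambda^{\alpha}|\ge1$ enters through the factor $\lambda^{-\binom{n}{2}\alpha_{(2)}}$; these genuinely mix the scaled variables and cannot be factored through a single coordinate $z_i$. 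This is, in effect, $\sum_n w^n S_n(e_\gamma z^\beta)$ for the right-inverse maps $S_n$ that the paper constructs; the paper sidesteps eigenvectors altogether in this case and verifies the Murillo-Arcila--Peris criterion (Theorem \ref{murillo}) on $X_0=\mathrm{span}\{e_\gamma z^\beta\}$ with exactly those $S_n$, reserving the Bayart--Matheron eigenvector argument for the complementary case $\alpha_{(2)}=0$ (Lemma \ref{nonconvolution CN lema2}). If you want to keep the Bayart--Matheron route you must both build these ray-type eigenvector fields and redo the density/spanning argument for them; as written, the construction and hence the whole proof does not go through.
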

\begin{proof}
By the above comments, we may suppose that $b_i=0$ for $i>j$, so the operator $T$ is as in (\ref{algunos 1 ya trasladado}).
We apply Theorem \ref{murillo} with
$$X_0=span\left\{e_\gamma z^\beta:=e^{\gamma_1z_1+\dots+\gamma_jz_j}z^\beta \textrm{ with } \beta_i=0 \textrm{
for } i\leq j \textrm{ and } \gamma\in \C^j\right\}.$$
The set $X_0\subset H(\C^N)$ is dense. Indeed, since the set $\{e_\gamma: \gamma\in \C^j\}$ generates a dense
subspace in $H(\C^j)$ (see for example \cite[Proposition 2.4]{BonGro06}), given a monomial $z_1^{\theta_1}\dots z_j^{\theta_j}$,
$\epsilon>0$ and $R>0$, there is $f\in span\{e_\gamma:\gamma\in \C^j\}$ with

$$
\sup_{\|z\|\leq R}\left|f(z_1,\dots,z_j)-z_1^{\theta_1}\dots z_j^{\theta_j}\right|<\epsilon.
$$
We obtain
$$
\sup_{\|z\|\leq R}\left|\underbrace{f(z_1,\dots,z_j)z_{j+1}^{\beta_{j+1}}\dots z_{N}^{\beta_{N}}}_{\in
X_0}-z_1^{\theta_1}\dots z_j^{\theta_j}z_{j+1}^{\beta_{j+1}}\dots z_{N}^{\beta_{N}}\right|<\epsilon R^{|\beta|}.
$$
Therefore we can approximate any monomial in $H(\C^N)$ by functions of $X_0$ uniformly on compacts sets.

The series $\sum_n T^n(e_\gamma z^\beta)$ is unconditionally convergent because the operator $T$
differentiates in some variable $z_i$ with $i>j$, and so it is a finite sum. On the other hand, if we denote by
$\alpha_{(1)}:=(\alpha_1,\dots,\alpha_j)$ and $\alpha_{(2)}:=(\alpha_{j+1},\dots,\alpha_N)\neq 0$, we obtain
$$T^n(e_\gamma
z^\beta)=\gamma^{n\alpha_{(1)}}e^{n\langle\gamma,b\rangle}\lambda^{n\beta-\frac{n(n+1)}{2}\alpha_{(2)}}\frac{
\beta!}{(\beta-n\alpha_{(2)})!}e_\gamma z^{\beta-n\alpha_{(2)}}.$$

Now, we define a sequence of maps $S_n:X_0\to X_0$. First, we do that on the set $\{e_\gamma z^\beta\}$ and then extending them by
linearity
$$
S_n(e_\gamma
z^\beta)=\frac{\beta!}{\gamma^{n\alpha_{(1)}}e^{n\langle\gamma,b\rangle}\lambda^{n\beta+\frac{n(n-1)}{2}
\alpha_{(2)}}(\beta+n\alpha_{(2)})!}e_\gamma z^{\beta+n\alpha_{(2)}}.
$$

The following assertions hold:
\begin{itemize}
\item $T\circ S_1=I:$ \begin{align*}T\circ S_1(e_\gamma
z^\beta)&=\frac{1}{\gamma^{\alpha_{(1)}}e^{\langle\gamma,b\rangle}\lambda^{\beta}}\frac{\beta!}{(\beta+\alpha_
{(2)})!}T(e_\gamma z^{\beta+\alpha_{(2)}})\\
&=\frac{1}{\gamma^{\alpha_{(1)}}e^{\langle\gamma,b\rangle}\lambda^{\beta}}\frac{\beta!}{(\beta+\alpha_{(2)})!}
\gamma^{\alpha_{(1)}}e^{\langle\gamma,b\rangle}e_{\gamma}\frac{(\beta+\alpha_{(2)})!}{\beta!}
z^\beta\lambda^\beta\\&=e_\gamma z^\beta.  \end{align*}
\item $T\circ S_n=S_{n-1}:$ \begin{align*}T&\circ S_n(e_\gamma
z^\beta)=\frac{1}{\gamma^{n\alpha_{(1)}}e^{n\langle\gamma,b\rangle}\lambda^{n\beta+\frac{n(n-1)}{2}\alpha_{(2)
}}}\frac{\beta!}{(\beta+n\alpha_{(2)})!}T(e_\gamma z^{\beta+n\alpha_{(2)}})\\
&=\frac{\beta!\gamma^{\alpha_{(1)}}e^{\langle\gamma,b\rangle}\lambda^{\beta+(n-1)\alpha_{(2)}}(\beta+n\alpha_{
(2)})!}{\gamma^{n\alpha_{(1)}}e^{n\langle\gamma,b\rangle}\lambda^{n\beta+\frac{n(n-1)}{2}\alpha_{(2)}}
(\beta+n\alpha_{(2)})!(\beta+(n-1)\alpha_{(2)})!}e_\gamma z^{\beta+(n-1)\alpha_{(2)}}\\&=
\frac{\beta!}{\gamma^{(n-1)\alpha_{(1)}}e^{(n-1)\langle\gamma,b\rangle}\lambda^{(n-1)\beta+\frac{(n-1)(n-2)}{2
}\alpha_{(2)}}(\beta+(n-1)\alpha_{(2)})!}e_\gamma z^{\beta+(n-1)\alpha_{(2)}}\\&= S_{n-1}(e_\gamma z^\beta).
     \end{align*}
\item Given $R>0$, let $|z|\le R$ and denote $C=|\frac{R^{\alpha_{(2)}}}{\lambda^\beta
\gamma^{\alpha_{(1)}}e^{\langle\gamma,b\rangle}}|$. We have $|S_n(e_\gamma z^\beta)|\leq M
\frac{C^n}{(\beta+n\alpha_{(2)})!}$ for some constant $M>0$ not depending on $n$. Since, $\alpha_{(2)}\neq 0$, we get that for each $\gamma\in\C^j$ and $\beta\in\C^N$ with $\beta_i=0$ for $i\leq j$,
$\sum_n |S_n(e_\gamma z^\beta)|$ is uniformly convergent on compacts sets.
\end{itemize}
We have thus shown that the hypothesis of Theorem \ref{murillo} are fulfilled. Hence $T$ is strongly mixing in the Gaussian sense, as we wanted to prove.
\end{proof}

The other case we need to prove is when $T$ does not differentiate in the variables $z_i$ with $i>j$. This means that $\alpha_i = 0 $ for all $i>j$. To prove this case we will use Theorem \ref{Bayart_Matheron}.
\begin{lemma}\label{nonconvolution CN lema2}
Let $T$ be as in (\ref{algunos 1}). Suppose that $|\lambda^\alpha|\geq 1$ and $\alpha_i=0$ for every $i>j$.
Then $T$ is strongly mixing in the Gaussian sense.
\end{lemma}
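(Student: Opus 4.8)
The plan is to apply the Bayart--Matheron eigenvalue criterion (Theorem~\ref{Bayart_Matheron}), exactly as announced before the statement. First I would use the translation conjugacy recorded before Lemma~\ref{nonconvolution CN lema1}, together with Proposition~\ref{conjugados}, to reduce to the operator $T_0$ of (\ref{algunos 1 ya trasladado}); i.e.\ I may assume $b_i=0$ for $i>j$. Now $\alpha$ is supported on the first $j$ coordinates and $\alpha\neq0$, so $\alpha_{(1)}=(\alpha_1,\dots,\alpha_j)\neq0$ and the entire function $\Phi(\gamma):=\gamma^{\alpha_{(1)}}e^{\langle\gamma,b\rangle}$ on $\C^j$ is non-constant. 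A one-line computation then shows that for every $\gamma\in\C^j$ and every multi-index $\beta$ supported on $\{j+1,\dots,N\}$,
$$T(e_\gamma z^\beta)=\Phi(\gamma)\,\lambda^\beta\,e_\gamma z^\beta,$$
so that $e_\gamma z^\beta$ is an eigenvector of $T$ with eigenvalue $\Phi(\gamma)\lambda^\beta$. This is the supply of eigenvectors I would work with.

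Next I would reduce the density requirement of Theorem~\ref{Bayart_Matheron} to a statement purely about $H(\C^j)$. Fix $D\subset\T$ with $\T\setminus D$ dense in $\T$. Since the monomials span a dense subspace of $H(\C^N)$, it is enough to approximate a given monomial $z^\theta$, uniformly on compact sets, by linear combinations of eigenvectors whose eigenvalue lies in $\T\setminus D$, where $\theta_{(1)}=(\theta_1,\dots,\theta_j)$ and $\theta_{(2)}=(\theta_{j+1},\dots,\theta_N)$. With $\beta:=\theta_{(2)}$ and $r:=1/|\lambda^\beta|$ fixed, set $\Gamma:=\{\gamma\in\C^j:\ \Phi(\gamma)\lambda^\beta\in\T\setminus D\}$; then every $e_\gamma z^\beta$ with $\gamma\in\Gamma$ is an admissible eigenvector, and since $|z^\beta|\le R^{|\beta|}$ on $\{\|z\|\le R\}$, a finite combination $\sum_k a_k e_{\gamma_k}$ with $\gamma_k\in\Gamma$ that is $\varepsilon R^{-|\beta|}$-close to the monomial $z_1^{\theta_1}\cdots z_j^{\theta_j}$ on a ball produces $\sum_k a_k e_{\gamma_k}z^\beta$ that is $\varepsilon$-close to $z^\theta$. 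Thus the whole lemma comes down to showing that $\mathrm{span}\{e_\gamma:\gamma\in\Gamma\}$ is dense in $H(\C^j)$.

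For this final step I would argue by duality. If $u\in H(\C^j)'$ annihilates every $e_\gamma$, $\gamma\in\Gamma$, then $F(\gamma):=u(e_\gamma)$ is an entire function on $\C^j$ (the map $\gamma\mapsto e_\gamma$ into $H(\C^j)$ is holomorphic) vanishing on $\Gamma$; and since $\{e_\gamma:\gamma\in\C^j\}$ is total in $H(\C^j)$ (as already used above, \cite[Proposition~2.4]{BonGro06}), it suffices to deduce $F\equiv0$. So what I really need is that $\Gamma$ is not contained in the zero set of any nonzero entire function. Here is where non-constancy of $\Phi$ enters: I would first locate a point $\gamma_0$ with $|\Phi(\gamma_0)|=r$, $\Phi(\gamma_0)\neq0$ and $\nabla\Phi(\gamma_0)\neq0$ — such a point exists because $|\Phi|$ is continuous, tends to $0$ near $\{\Phi=0\}$ and is unbounded along a suitable complex ray (using $\alpha_{(1)}\neq0$), while $\{\nabla\Phi=0\}$ is a proper analytic subset. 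Near $\gamma_0$, $g:=\log\Phi$ is holomorphic with $dg\neq0$, so straightening $g$ to a coordinate gives a polydisc $U$ on which $\Gamma\cap U=\{w:\ \mathrm{Re}\,w_1=\log r,\ \mathrm{Im}\,w_1\notin D'\}$ for a translate $D'$ of $D$; density of $\T\setminus D$ makes $\R\setminus D'$ dense, so each $w_1$-slice of $\Gamma\cap U$ is dense in a segment, hence has an accumulation point. Applying the one-variable identity theorem in $w_1$ for each fixed value of the remaining coordinates then forces $F\equiv0$ on $U$, and therefore on $\C^j$. This proves the claim and hence the lemma; Proposition~\ref{conjugados} returns the result for the original operator $T$.

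I expect the density of $\mathrm{span}\{e_\gamma:\gamma\in\Gamma\}$ to be the only genuine obstacle. The delicate point is that for $j\ge2$ a subset of $\C^j$ with an accumulation point need not be a uniqueness set for entire functions, so one must actually see that $\Gamma$ fills out a relatively open piece of the real hypersurface $\{|\Phi|=r\}$; the change of coordinates by $\log\Phi$ — available precisely because $\alpha_{(1)}\neq0$ — is what turns this into a one-variable identity-theorem argument. Everything else (the eigenvalue computation, the holomorphy of $\gamma\mapsto u(e_\gamma)$, the existence of $\gamma_0$, and the uniform-approximation bookkeeping) is routine.
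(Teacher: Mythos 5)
Your proposal is correct and follows essentially the same route as the paper: reduce by the translation conjugacy to the operator in (\ref{algunos 1 ya trasladado}), observe that the functions $e_\gamma z^\beta$ (with $\gamma$ supported on the first $j$ coordinates and $\beta$ on the last $N-j$) are eigenvectors with eigenvalue $\gamma^{\alpha_{(1)}}\lambda^\beta e^{\langle\gamma,b\rangle}$, and verify the density hypothesis of Theorem \ref{Bayart_Matheron} by reducing it, monomial by monomial, to the density in $H(\C^j)$ of the span of those exponentials whose eigenvalue lies in $\T\setminus D$. The only divergence is at that last step: the paper notes that $f_\beta(\gamma)=\gamma^{\alpha}\lambda^\beta e^{\langle\gamma,b\rangle}$ is a non-constant entire function attaining modulus $1$ at some $\gamma_0$ and then invokes \cite[Proposition 2.4]{BonGro06}, whereas you prove the needed density from scratch by duality: $F(\gamma)=u(e_\gamma)$ is entire, and you show that $\{\gamma:\,f_\beta(\gamma)\in\T\setminus D\}$ is a uniqueness set by straightening $\log f_\beta$ near a point of the level set where $\nabla f_\beta\neq0$ and applying the one-variable identity theorem slice by slice. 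Your argument is sound — the existence of such a point follows, as you indicate, because the level set $\{|f_\beta|=r\}$ is nonempty (Liouville) and cannot be contained in the proper analytic set $\{\nabla f_\beta=0\}$, e.g.\ by joining a point where $|f_\beta|<r$ to one where $|f_\beta|>r$ by a path avoiding that set — and you correctly identify the genuine subtlety that in several variables an accumulation point alone does not give a uniqueness set; that is precisely the content of the proposition of Bonilla and Grosse-Erdmann that the paper cites (and which the paper's one-line justification leans on). So your proof buys self-containedness at the cost of redoing that known lemma, while the paper's is shorter because it outsources it.
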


\begin{proof}
We may suppose that $b_i=0$ for $i>j$, so the operator $T$ is as in (\ref{algunos 1 ya trasladado}).
The functions $e_\gamma z^\beta$, with $\gamma_i=0$ for all $i>j$ and $\beta_i=0$ for every $i\le j$, are eigenfunctions of $T$. Indeed,
$$
T(e_\gamma z^\beta) = \gamma^{\alpha_{(1)}} e^{\sum \gamma_i(z_i+b_i)} (\lambda z)^\beta
=\gamma^{\alpha_{(1)}}\lambda^\beta e^{\langle \gamma,b \rangle} e_\gamma
z^\beta,$$
where, as in the proof of the last lemma, $\alpha_{(1)}=(\alpha_1,\dots,\alpha_j)\neq 0$ (note that in this
case $\alpha_{(2)}=(\alpha_{j+1},\dots,\alpha_N)=0$).

By Theorem \ref{Bayart_Matheron} it is enough to show that for every set $D\subset\T$ such that $\T\setminus D$ is dense in $\T$, the set
\begin{equation}\label{denso lema2}
\left\{e_\gamma z^\beta; \, \beta\in\C^N\textrm{ with } \beta_i=0 \textrm{ for } i\le j \textrm{ and } \gamma_i=0 \textrm{ for } i> j,
\textrm{ such that } \gamma^\alpha \lambda^\beta e^{\langle \gamma,b \rangle}\in \T\setminus D \right\},
\end{equation}
spans a dense subspace on $H(\C^N)$.

Fix $\beta\in\C^N$ with $\beta_i=0$ for every $i\le j$ and consider the map
\vspace{-1mm}
\begin{align*}
f_\beta:\C^j&\to \C\\
\gamma &\mapsto \gamma^\alpha \lambda^\beta e^{\langle \gamma,b \rangle}.
\end{align*}
\vspace{-1mm}
The application $f_\beta$ is holomorphic and non constant. So there exists $\gamma_0\in\C^j$ such that $|{\gamma_0}^\alpha \lambda^\beta
e^{\langle \gamma_0,b \rangle}|=1$. Since, $\T\setminus D$ is a dense set in $\T$, the vector $\gamma_0$ is an accumulation point of $\T\setminus D$. Thus, by \cite[Proposition 2.4]{BonGro06}, we get that the set $$\left\{e_\gamma; \,\textrm{with }\gamma
\textrm{ such that } \gamma^\alpha \lambda^\beta e^{\langle \gamma,b \rangle}\in\T\setminus D\right\},$$ spans a dense subspace in $H(\C^j)$. It is then easy to see that the set defined in \eqref{denso lema2} spans a dense subspace in $H(\C^N)$.
In particular, we have shown that the set of eigenvectors of $T$ associated to eigenvalues belonging to $\T\setminus D$ span a dense subspace in $H(\C^N)$. So, the hypothesis of Theorem \ref{Bayart_Matheron} are satisfied and hence $T$ is strongly mixing in the Gaussian sense.
\end{proof}

The following remark will be useful for the next proof and in the rest of the article.
\begin{remark}\rm
Recall the Cauchy's formula for holomorphic functions in $\C^N$,
$$
D^\alpha f(z_1,\dots,z_N)=\frac{\alpha!}{(2\pi i)^N} \int_{|w_1-z_1|=r_1}\dots \int_{|w_N-z_N|=r_N} \frac{f(w_1,\dots,w_N)}{\prod_{i=1}^N(w_i-z_i)^{\alpha_i+1}}  \,dw_1\dots dw_N.
$$
Therefore, we can estimate the supremum  of $D^\alpha f$ over a set of the form $B(z_1,r_1)\times\dots\times
B(z_N,r_N)$, where $B(z_j,r_j)$ denotes the closed disk of center $z_j\in\mathbb C$ and radius $r_j$. Fix
positive real numbers $\varepsilon_1,\dots,\varepsilon_N$, then
\begin{equation}\label{cauchy}
\|D^\alpha f\|_{\infty, B(z_1,r_1)\times\dots\times B(z_N,r_N)}\leq \frac{\alpha!}{(2\pi)^N} \frac{\|f\|_{\infty,B(z_1,r_1+\varepsilon_1)\times\dots\times B(z_N,r_N+\varepsilon_N)}}{\varepsilon_1^{\alpha_1+1}\dots\varepsilon_N^{\alpha_N+1}}.
\end{equation}
\end{remark}

\begin{proof}(of Theorem \ref{nonconvolution CN})
Part $a)$ is proved by Lemmas \ref{nonconvolution CN lema1} and \ref{nonconvolution CN lema2}.

$b)$ Suppose that $b_l\ne0$ for some $l$ such that $\lambda_l=1$. We will prove that $T$ is a mixing operator,
i.e., that for every pair  $U$ and $V$ of non empty open sets for the local uniform topology of $H(\C^N)$,
there exists $n_0\in\N$ such that $T^n(U)\cap V\neq\emptyset$ for all $n\geq n_0$. Let $f$ and $g$ be two
holomorphic functions on $H(\C^N)$, $L$ be a compact set of $\C^N$ and $\theta$ a positive real number. We can
assume that
\[
U=\{h\in H(\C^N)\,: \; \|f-h\|_{\infty,L}<\theta \} \; \text{ and } \; V=\{h\in H(\C^N)\,: \; \|g-h\|_{\infty,L}<\theta \},
\]
and that $g$ is a polynomial and that $L$ is a closed ball  of $(\C^N,\|\cdot\|_{\infty})$. We do so because we can define a right inverse map over the set of polynomials.
Since $T=C_{\phi}\circ D^{\alpha}$, we can define
$$
I^\alpha(z^\beta)=\frac{\beta!}{(\alpha+\beta)!}z^{\alpha+\beta}.
$$
Thus, $S=I^\alpha\circ C_{\phi^{-1}}$ is a right inverse for $T$ when restricted to polynomials.
Hence, we assume that $L=B(0,r)\times B(0,r)\times\dots\times B(0,r)$, for some $r>0$ and denote $\phi_i(z)=\lambda_i z+b_i$, for $z\in\C$. We get that $\phi(z_1,\dots,z_N)=(\phi_1(z_1)\dots,\phi_N(z_N))$ and $\phi_i(B(z_i,r_i))
=B(\phi_i(z_i),|\lambda_i|r_i)$.

Now, suppose that $P$ is a polynomial in $\C^N$. Applying the inequality (\ref{cauchy}) several times, in which each time we use it we divide each $\varepsilon_i$ by $2$,  we get that

\begin{align*}
\left\|g-T^nP\right\|_{\infty, L}&=  \left\|C_{\phi}\circ D^{\alpha}(Sg-T^{n-1}P)\right\|_{\infty, L}=
\left\| D^{\alpha}(Sg-T^{n-1}P)\right\|_{\infty, \phi(L)}
\\&=  \left\| D^{\alpha}(Sg-T^{n-1}P)\right\|_{\infty, \prod B(b_i,|\lambda_i|r)}
\\& \leq \frac{\alpha!}{ {(2\pi)}^N  \varepsilon_1^{\alpha_1+1}\dots\varepsilon_N^{\alpha_N+1}} \left\| Sg-T^{n-1}P\right\|_{\infty, \prod B(b_i,|\lambda_i|r+\varepsilon_i)}
\\& \leq \frac{\alpha!}{ {(2\pi)}^N  \varepsilon_1^{\alpha_1+1}\dots\varepsilon_N^{\alpha_N+1}} \left\| C_{\phi}\circ D^\alpha(S^2g-T^{n-2}P)\right\|_{\infty,\prod B(b_i,|\lambda_i|r+\varepsilon_i)}
\\& \leq \frac{\alpha!}{ {(2\pi)}^N  \varepsilon_1^{\alpha_1+1}\dots\varepsilon_N^{\alpha_N+1}} \left\| D^\alpha(S^2g-T^{n-2}P)\right\|_{\infty,\prod B((\lambda_i+1)b_i,|\lambda_i|(|\lambda_i|r+\varepsilon_i))}
 \\& \leq \frac{2^{|\alpha|+N}\alpha!^2}{ {(2\pi)}^{2N}  \varepsilon_1^{2(\alpha_1+1)}\dots\varepsilon_N^{2(\alpha_N+1)}} \left\| S^2g-T^{n-2}P\right\|_{\infty,\prod B((\lambda_i+1)b_i,|\lambda_i|(|\lambda_i|r+\varepsilon_i)+\frac{\varepsilon_i}{2})}
\end{align*}
Thus following, we get that
\begin{align*}
\left\|g-T^nP\right\|_{\infty, L}\leq \frac{2^{(n(n+1)/2)(|\alpha|+N)}\alpha!^n}{ {(2\pi)}^{nN}  \varepsilon_1^{n(\alpha_1+1)}\dots\varepsilon_N^{n(\alpha_N+1)}} \left\| S^ng-P\right\|_{\infty,\prod B\left(\phi_i^n(0),|\lambda_i|^nr+\varepsilon_i\sum_{k=0}^{n-1}\frac{|\lambda_i|^k}{2^{n-k-1}}\right)}.
\end{align*}

Let us denote by $l$, the coordinate of $\phi$ that is a translation in $\C$. Thus, we have that $\lambda_l=1$ and $b_l\neq 0$. This implies that
$$
B\left(\phi_l^n(0),|\lambda_l|^nr+\varepsilon_l\sum_{k=0}^{n-1}\frac{|\lambda_l|^k}{2^{n-k-1}}\right)= B\left(nb_l,r+\varepsilon_l\sum_{k=0}^{n-1}\frac{1}{2^k}\right)\subset B\left(nb_l, r+2\varepsilon_l\right).
$$
Fix $n_0\in \N$, such that $B(0,r)\cap B\left(nb_l, r+2\varepsilon_l\right)=\emptyset$ for all $n\geq n_0$.
Now, take $\delta_n>0$ and $\Lambda_n$ a ball of $(\C^N,\|\cdot\|_{\infty})$, such that $[L+\delta_n]\cap[\Lambda_n+\delta_n]=\emptyset$ for all $n\geq n_0$ and
$$\prod_{i=1}^NB\left(\phi_l^n(0),|\lambda_l|^nr+\varepsilon_l\sum_{k=0}^{n-1}\frac{|\lambda_l|^k}{2^{n-k-1}}\right)\subset \Lambda_n.$$
Also, denote by  $K_n=\frac{2^{(n(n+1)/2)(|\alpha|+N)}\alpha!^n}{ {(2\pi)}^{nN}  \varepsilon_1^{n(\alpha_1+1)}\dots\varepsilon_N^{n(\alpha_N+1)}}$. Then, use Theorem \ref{Runge} with $h_n=\chi_{L+\delta_n}f+\chi_{\Lambda_n+\delta_n}S^ng$.
We get a polynomial $P_n$ such that
$$
\|f-P_n\|_L<\theta \; \text{ and } \; \|S^ng-P_n\|_{\Lambda_n}<\frac{\theta}{K_n}.
$$
Hence,
$$
\|f-P_n\|_L<\theta \; \text{ and } \; \|g-T^nP_n\|_{L}<\theta.
$$
Thus, $P_n\in U\cap T^{-n}V$ for all $n\geq n_0$ and $T$ is a mixing operator as we wanted to prove.

$c)$ Let $\frac{b}{1-\lambda}=(\frac{b_1}{1-\lambda_1},\dots,\frac{b_N}{1-\lambda_N})$ where, if $b_j=0$ and
$\lambda_j=0$ for some $j=1,\dots,N$,  we will understand that $\frac{b_j}{1-\lambda_j}=0$. Then
$\frac{b}{1-\lambda}$ is a fixed point of $\phi$, and thus
$$
T^nf\left(\frac{b}{1-\lambda}\right)=\lambda^{\frac{n(n-1)}{2}\alpha}D^{n\alpha}f\left(\frac{b}{1-\lambda}
\right).
$$
Applying the Cauchy estimates we obtain
$$
\left|T^n f\left(\frac{b}{1-\lambda}\right)\right|\leq
|\lambda^\alpha|^{\frac{n(n-1)}{2}}\left|D^{n\alpha}f\left(\frac{b}{1-\lambda}\right)\right|\leq
\frac{|\lambda^\alpha|^{\frac{n(n-1)}{2}}(n\alpha)!}{r^{n|\alpha|}}\sup_{\|z\|\leq
r}|f(z)|\mathop{\longrightarrow}_{n\to\infty}0.
$$
Since the evaluation at the vector $\frac{b}{1-\lambda}$ is a continuous
functional, this implies that the orbit of $f$ under $T$ is not dense.
\end{proof}

Notice that in case $b)$ of Theorem \ref{nonconvolution CN} we do not know if the operator $C_\phi \circ D^\alpha$ is strongly mixing in the Gaussian sense or even frequently hypercyclic. If $|\lambda_i|\leq 1$ for $1\leq i\leq N$, we are able to show that the operator is frequently hypercyclic. To achieve this we prove that $C_\phi\circ D^\alpha$ is Runge transitive.

\begin{definition}\rm\label{runge transitive}
An operator $T$ on a Fr\'echet space $X$ is called Runge
transitive if there is an increasing sequence $(p_n)$ of seminorms defining the topology of $X$
and numbers $N_m \in \N$, $C_{m,n} > 0$ for $m,n \in \N$ such that:
\begin{enumerate}
\item for all $m,n \in \N$ and $x \in X$,
$$
p_m (T^n x) \leq C_{m,n} p_{n+N_m} (x)
$$
\item for all $m,n \in \N$, $x,y \in X$ and $\varepsilon > 0$ there is some $z \in X$ such that
$$
p_n (z - x) < \varepsilon \textrm{ and } p_m (T^{n+N_m} z - y) < \varepsilon.
$$
\end{enumerate}
\end{definition}

The concept of Runge transitivity was introduced by Bonilla and Grosse-Erdmann, were they proved in \cite[Theorem 3.3]{BonGro07}, that every Runge transitive operator on a Fr\'echet space is frequently hypercyclic. They also show that every translation operator on $H(\C)$ is Runge transitive. However, the differentiation operator on $H(\C)$ is not Runge transitive, even though we know that it is strongly mixing in the Gaussian sense. Now, we prove that some of the operators which are included in the case $b)$ are frequently hypercyclic.

\begin{proposition}
Let $T$ be the operator on $H(\C^N)$, defined by $Tf(z)=C_\phi\circ D^\alpha f(z)$, with $\alpha\neq 0$, $\phi(z)=(\lambda_1z_1+b_1,\dots,\lambda_Nz_N+b_N)$ and $\lambda_i\neq 0$ for all $i$, $1\le i\le N$. Then, if $|\lambda_i|\leq 1$ for every $i$, $1\le i\le N$ and we have that $b_j\ne0$ and $\lambda_j=1$ for some $j$, $1\le j\le N$, then $T$ is Runge transitive.
\end{proposition}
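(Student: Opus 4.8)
The plan is to verify directly the two conditions of Definition \ref{runge transitive}, with the seminorms $p_k(f):=\sup_{\|z\|_\infty\le R_k}|f(z)|$ for the increasing exhaustion $R_k:=b^*k+1$, where $b^*:=\max\{|b_i|\colon\lambda_i=1\}>0$ (positive thanks to the hypothesis $b_j\ne0$, $\lambda_j=1$). I would leave $N_m\in\N$ to be fixed at the very end as a sufficiently large (linear in $m$) integer. Recall from the proof of Theorem \ref{nonconvolution CN} that $T^nf(z)=\lambda^{\frac{n(n-1)}{2}\alpha}D^{n\alpha}f(\phi^n(z))$ and that $\phi^n(B_\infty(0,R))=\prod_i B(c_i^{(n)},|\lambda_i|^nR)$, with $c_i^{(n)}=nb_i$ when $\lambda_i=1$ and $|c_i^{(n)}|\le \frac{2|b_i|}{|1-\lambda_i|}$ otherwise. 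The key effect of the hypothesis $|\lambda_i|\le1$ for all $i$ is that these polydisks never expand: the radius $|\lambda_i|^nR$ stays $\le R$, and the centers drift at most linearly in $n$.

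For condition (1), I would apply the Cauchy estimate \eqref{cauchy} once, with $\alpha$ replaced by $n\alpha$ and all $\varepsilon_i=1$, and use the inclusion $\phi^n(B_\infty(0,R_m))\subseteq B_\infty\big(0,\,nb^*+R_m+M+1\big)$, where $M:=\max\{\frac{2|b_i|}{|1-\lambda_i|}\colon\lambda_i\ne1\}$ (read as $0$ if there is no such $i$). This gives
$$
p_m(T^nf)\le |\lambda^\alpha|^{\frac{n(n-1)}{2}}\frac{(n\alpha)!}{(2\pi)^N}\,\|f\|_{\infty,\,B_\infty(0,\,nb^*+R_m+M+1)}.
$$
Since $R_{n+N_m}=b^*(n+N_m)+1\ge nb^*+R_m+M+1$ whenever $N_m\ge m+(M+1)/b^*$, this is precisely $p_m(T^nx)\le C_{m,n}\,p_{n+N_m}(x)$ with $C_{m,n}:=|\lambda^\alpha|^{n(n-1)/2}(n\alpha)!/(2\pi)^N$.

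For condition (2), fix $m,n,x,y,\varepsilon$ and set $k:=n+N_m$. First approximate $y$ by a polynomial $g$ with $\|y-g\|_{\infty,B_\infty(0,R_m)}<\varepsilon/2$ (polynomials are dense, e.g.\ by Theorem \ref{Runge}). On polynomials $T=C_\phi\circ D^\alpha$ has the right inverse $S:=I^\alpha\circ C_{\phi^{-1}}$ with $I^\alpha(z^\beta)=\frac{\beta!}{(\alpha+\beta)!}z^{\alpha+\beta}$, so $T^kS^k=\mathrm{Id}$ there. Iterating \eqref{cauchy} exactly as in the proof of part $b)$ of Theorem \ref{nonconvolution CN} yields, for every polynomial $P$,
$$
\|g-T^kP\|_{\infty,B_\infty(0,R_m)}\le K_k\,\|S^kg-P\|_{\infty,\Lambda_k},
$$
where $K_k$ is the explicit constant appearing there and $\Lambda_k$ is a compact polydisk whose $i$-th projection is a disk centered at $\phi_i^k(0)$ of radius $\le R_m+2$ (here $|\lambda_i|\le1$ is used once more to keep this radius bounded in $k$). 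Choosing a translation coordinate $i^*$ with $|b_{i^*}|=b^*$, the $i^*$-projection of $\Lambda_k$ is a disk of radius $\le R_m+2$ centered at the point $kb_{i^*}$, which is disjoint from $\{|z_{i^*}|\le R_n\}$ as soon as $kb^*>R_n+R_m+2$; since $k=n+N_m$ and $R_n=b^*n+1$, the $n$'s cancel and this holds once $N_m>m+4/b^*$. Then $B_\infty(0,R_n)$ and $\Lambda_k$ are disjoint convex compact sets, so Theorem \ref{Runge} applied to the function equal to $x$ near $B_\infty(0,R_n)$ and to $S^kg$ near $\Lambda_k$ produces a polynomial $z$ with $\|x-z\|_{\infty,B_\infty(0,R_n)}<\varepsilon$ and $\|S^kg-z\|_{\infty,\Lambda_k}<\varepsilon/(2K_k)$; hence $p_n(z-x)<\varepsilon$ and $p_m(T^kz-y)\le K_k\|S^kg-z\|_{\infty,\Lambda_k}+\|g-y\|_{\infty,B_\infty(0,R_m)}<\varepsilon$.

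Finally I would fix $N_m$ to be any integer exceeding the finitely many lower bounds encountered above (all of the form $m+\text{const}$), which makes both conditions hold simultaneously; thus $T$ is Runge transitive, and in particular frequently hypercyclic by \cite{BonGro07}. The only genuinely delicate point is the calibration of the growth rate of $R_k$: condition (1) forces this rate to be \emph{at least} $b^*$ (otherwise $\phi^n(B_\infty(0,R_m))$ eventually leaves $B_\infty(0,R_{n+N_m})$), while the separation required in condition (2) forces it to be \emph{at most} $b^*$ (a faster exhaustion would engulf the far-away polydisk $\Lambda_k$). That these two constraints are exactly compatible — rate $=b^*$ — and that all remaining slack is absorbed into $N_m$, is the crux of the argument, and is where the hypothesis $|\lambda_i|\le1$ for every $i$ is indispensable.
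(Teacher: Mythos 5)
Your proposal is correct and takes essentially the same route as the paper: you verify the two conditions of Definition \ref{runge transitive} with sup-seminorms on polydisks whose radii grow linearly at the rate dictated by the translation coordinates, control everything with the Cauchy estimate \eqref{cauchy} (using $|\lambda_i|\le 1$ to keep the radii from expanding while the centers drift linearly), and for condition (2) use the right inverse $S=I^\alpha\circ C_{\phi^{-1}}$ on polynomials together with Theorem \ref{Runge} after separating the drifted polydisk from the base polydisk in a translation coordinate. The differences are only cosmetic: you take uniform radii $R_k=b^*k+1$ and a one-shot Cauchy estimate via $T^nf=\lambda^{\frac{n(n-1)}{2}\alpha}(D^{n\alpha}f)\circ\phi^n$ with $N_m=m+O(1)$, whereas the paper uses coordinate-dependent radii $r_i(m)$, iterates \eqref{cauchy}, and gets $N_m=m+1$.
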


\begin{proof}
Define the increasing sequence of seminorms
$$
p_m(f)=\sup_{\prod_{i=1}^N B(0,r_i(m))}|f(z)|,
$$
where the radius $r_i(m)$ are defined as follows:
$$
r_i(m)= \left\{
	\begin{array}{cc}
		|b_i|m  & \mbox{if } b_i \neq 0 \\
		 m & \mbox{if } b_i = 0
	\end{array}
\right.
$$
We will prove that both conditions of the Definition \ref{runge transitive} are satisfied with $N_m=m+1$.
For the first condition, we proceed as in the proof of part $c)$ of Theorem \ref{nonconvolution CN}. We will apply several times the Cauchy inequalities (\ref{cauchy}) with $\varepsilon_i$ defined as
$$
\varepsilon_i= \left\{
	\begin{array}{cc}
		\frac{|b_i|}{2}  & \mbox{if } b_i \neq 0 \\
		\frac{1}{2} & \mbox{if } b_i = 0
	\end{array}
\right.
$$
and in each step we divide it by 2. So, we get that
$$
p_m(T^nf) \leq \frac{2^{(n(n+1)/2)(|\alpha|+N)}\alpha!^n}{ {(2\pi)}^{nN}  \varepsilon_1^{n(\alpha_1+1)}\dots\varepsilon_N^{n(\alpha_N+1)}} \sup_{\Lambda} |f(z)|,
$$
where $\Lambda= \prod B\left(\phi_i^n(0),|\lambda_i|^nr_i(m)+\varepsilon_i\sum_{k=0}^{n-1}\frac{|\lambda_i|^k}{2^{n-k-1}}\right)$.

Since $|\lambda_i|\leq 1$ for every $i$, $1\le i\le N$, we obtain that
$$
|\phi_i^n(0)|=\left|b_i\sum_{k=0}^{n-1}\lambda_i^k\right| \leq |b_i|n,
$$
and that
$$
|\lambda_i|^nr_i(m)+\varepsilon_i\sum_{k=0}^{n-1}\frac{|\lambda_i|^k}{2^{n-k-1}} \leq r_i(m) + 2\varepsilon_i.
$$
From here it is easy to prove that $\Lambda \subseteq \prod B(0, r_i(n+m+1))$. Thus, if we denote $C_{m,n} = \frac{2^{(n(n+1)/2)(|\alpha|+N)}\alpha!^n}{ {(2\pi)}^{nN}  \varepsilon_1^{n(\alpha_1+1)}\dots\varepsilon_N^{n(\alpha_N+1)}}$, we get that
$$
p_m(T^nf) \leq C_{m,n} p_{n+m+1} (f).
$$

Suppose that $\varepsilon$ is a positive number, $n$ and $m$ are two integer numbers and that $f$, $g$ are two holomorphic functions on $H(\C^N)$, we want to prove that there exists some function $h\in H(\C^N)$ such that
$$
p_n (f - h) < \varepsilon \textrm{ and } p_m (T^{n+m+1}h - g) < \varepsilon.
$$
Similarly, for the second condition we can estimate $p_m(T^{n+m+1}h-g)$ in the same way we did previously by making use of the right inverse for $T$. We get that
$$
p_m(T^{n+m+1}h-g)\leq C \sup_{\Gamma} |S^{n+m+1}g - h|
$$
where $C$ is some positive constant and $\Gamma = \prod B\left(\phi_i^{n+m}(0),|\lambda_i|^{n+m+1}r_i(m)+\varepsilon_i\sum_{k=0}^{n+m}\frac{|\lambda_i|^k}{2^{n-k-1}}\right)$.

To assure the existence of such function $h$, by Runge's Theorem \ref{Runge}, it is enough to prove that $\Gamma\cap \prod B(0,r_i(n))=\emptyset$.
We study this sets in the $j$-th coordinate. We get that

$$
\Gamma_j = B(b_j(n+m), r_j(m)+2\varepsilon_j) = B(b_j(n+m), |b_j|(m+1)),
$$
which is disjoint from $B(0, |b_j| n)$. Then, we have proved that the operator $T$ is Runge transitive, hence it is frequently hypercyclic.
\end{proof}


\section{The non-diagonal case}

We are now interested in the case in which the automorphism $\phi(z)=Az + b,$ is given by any invertible matrix $A \in \C^{N\times N}$. Let $v\neq 0$ be any vector in $\C^N$ and let $T$ be the operator on $H(\C^N)$ defined by
$$
Tf(z)=C_\phi\circ D_v f (z)=D_vf(Az+b),
$$
where $D_vf$ is the differential operator in the direction of $v$,
$$D_vf(z_0)=\lim_{s\to 0} \frac{f(z_0+sv)-f(z_0)}{s}= \nabla f(z_0)\cdot v= df(\phi(z_0))(v).$$
The next two remarks show that we may consider a simplified version of the operator $T$.
\begin{remark}\rm\label{conjugar.la.matriz}
We can assume that the matrix $A$ is given in its Jordan form. Indeed, let $Q$ be an invertible matrix
in $\C^{N\times N}$ such that $A=QJQ^{-1}$, where $J$ is the Jordan form of $A$. Also let $c=Q^{-1}b$ and
denote $Q^*(f)(z)=f(Qz)$ for $f\in H(\C^N)$. Thus, we have that
$$
Q^*(C_\phi\circ D_v f)(z)=\nabla f(AQz+b)\cdot v.
$$
If we denote $\psi(z)=Jz+c$ and $w=Q^{-1}v$ then,
$$
(C_\psi\circ D_w) Q^*(f)(z)= \nabla f(Q(Jz+c))\cdot Qw = \nabla f(AQz+b)\cdot v.
$$
We have proved that the following diagram commutes

	$$
    \xymatrix{H(\C^N) \ar[r]^{C_\phi \circ D_v} \ar[d]_{Q^*} & H(\C^N) \ar[d]^{Q^*}  \\ H(\C^N) \ar[r]_{C_\psi\circ D_w} & H(\C^N)
}
    $$
This shows that $C_\phi \circ D_v$ is linearly conjugate to $C_\psi\circ D_w$.
\end{remark}

\begin{remark}\rm\label{conjugar.el.b}
We can assume that $b=0$ if the affine linear map $\phi$ has a fixed point $z_0=\phi(z_0)$.
Indeed, if we denote $\varphi(z)=Az$ then,
$$
\tau_{z_0}(C_\phi \circ D_v) (f)(z)= D_v(f)(A(z+z_0)+b)= \tau_{z_0}D_v(f)(Az) = (C_\varphi \circ
D_v)\tau_{z_0} (f) (z).
$$
We have that the following diagram commutes
$$
    \xymatrix{H(\C^N) \ar[r]^{C_\phi \circ D_v} \ar[d]_{\tau_{z_0}} & H(\C^N) \ar[d]^{\tau_{z_0}} \\ H(\C^N) \ar[r]_{C_\varphi\circ D_v} & H(\C^N)
 }
$$
We conclude that $C_\phi \circ D_v$ is linearly conjugate to $C_\varphi\circ D_v$.
\end{remark}
The first two results of this section deal with affine transformations that have fixed points.

\begin{proposition}
Let $A \in \C^{N\times N}$ be an invertible matrix and let $v$ be a nonzero vector in $\C^N$. Suppose that the affine linear map $\phi(z)=Az+b$ has a fixed point and that
$$
\lim_{k\to\infty} k! \prod_{i=0}^{k-1} \|A^i v\| <+\infty.
$$
Then the operator $C_\phi\circ D_v$ acting on $H(\C^N)$ is not hypercyclic.

Consequently, $C_\phi\circ D_v$ is not hypercyclic if $v$ belongs to an invariant subspace $M$ of $A$ such that the spectral radius of the restriction, $r(A|_M)$, is less than 1. This happens in particular if $r(A)<1$ or if $v$ is an eigenvector of $A$ associated to an eigenvalue of modulus strictly less than 1.
\end{proposition}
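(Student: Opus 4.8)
The plan is to first reduce, by Remark~\ref{conjugar.el.b}, to the case $b=0$: since $\phi$ has a fixed point, $C_\phi\circ D_v$ is linearly conjugate (via the translation $\tau_{z_0}$) to the operator $Tf(z)=D_vf(Az)$, and hypercyclicity is preserved under such a conjugacy. Now $0$ is a fixed point of $z\mapsto Az$, and the key is an explicit formula for the iterates. From the identity $D_v(g\circ B)=(D_{Bv}g)\circ B$, valid for every linear $B$ (just the chain rule), an immediate induction gives
\[
T^nf=\big(D_{A^{n-1}v}D_{A^{n-2}v}\cdots D_{Av}D_vf\big)\circ A^n,\qquad\text{hence}\qquad T^nf(0)=D_{A^{n-1}v}\cdots D_vf(0).
\]
Since evaluation at $0$ is a continuous functional on $H(\C^N)$, it suffices to prove that $T^nf(0)\to0$ for every $f\in H(\C^N)$; then no orbit can be dense.

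To estimate $T^nf(0)$ I would introduce one complex variable per direction: fix $f$ and set $g(s_1,\dots,s_n)=f\big(\sum_{i=1}^n s_i A^{i-1}v\big)$, an entire function on $\C^n$, for which a direct computation shows $T^nf(0)=\partial_{s_1}\partial_{s_2}\cdots\partial_{s_n}g(0)$. Cauchy's formula in $\C^n$ with polyradius $(\rho_1,\dots,\rho_n)$ then yields
\[
|T^nf(0)|\le\frac{1}{\rho_1\cdots\rho_n}\sup_{|s_i|=\rho_i}|g(s)|\le\frac{1}{\rho_1\cdots\rho_n}\,\|f\|_{\infty,\overline B(0,\,\sum_i\rho_i\|A^{i-1}v\|)}.
\]
Because $A$ is invertible and $v\neq0$, each $A^{i-1}v\neq0$, so for a fixed $R>0$ I may choose $\rho_i=R/(n\|A^{i-1}v\|)$; this makes $\sum_i\rho_i\|A^{i-1}v\|=R$ and $(\rho_1\cdots\rho_n)^{-1}=n^n\prod_{i=0}^{n-1}\|A^iv\|/R^n$, so that $|T^nf(0)|\le (n^n/R^n)\big(\prod_{i=0}^{n-1}\|A^iv\|\big)\|f\|_{\infty,\overline B(0,R)}$.

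The delicate point — and the one I expect to be the main obstacle — is that this bound features $n^n\prod_{i=0}^{n-1}\|A^iv\|$, while the hypothesis only controls $n!\prod_{i=0}^{n-1}\|A^iv\|$; a uniform choice of Cauchy radii costs a factor $n^n/n!\approx e^n$, so with a careless choice of $R$ the estimate does not converge. The resolution is that $R$ is free: using $n^n/n!\le e^n$ (it is the $n$-th term of $\sum_k n^k/k!=e^n$) together with the bound $n!\prod_{i=0}^{n-1}\|A^iv\|\le C$ (the sequence in the statement converges, hence is bounded), I get $|T^nf(0)|\le C\,(e/R)^n\,\|f\|_{\infty,\overline B(0,R)}$, which tends to $0$ as soon as $R>e$ (e.g. $R=3$). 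This proves $T$, hence $C_\phi\circ D_v$, is not hypercyclic.

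For the last assertion, suppose $v$ lies in an $A$-invariant subspace $M$ with $r(A|_M)<1$ and pick $\rho\in(r(A|_M),1)$. By Gelfand's spectral radius formula there is $C_0>0$ with $\|A^i|_M\|\le C_0\rho^i$, so $\prod_{i=0}^{k-1}\|A^iv\|\le(C_0\|v\|)^k\rho^{k(k-1)/2}$, and therefore $k!\prod_{i=0}^{k-1}\|A^iv\|\le k!(C_0\|v\|)^k\rho^{k(k-1)/2}\to0$, since the quadratic decay $\rho^{k(k-1)/2}$ dominates $k!(C_0\|v\|)^k$. Thus the hypothesis of the proposition holds and the conclusion follows. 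The stated particular cases are instances of this: take $M=\C^N$ when $r(A)<1$, and $M=\C v$ when $v$ is an eigenvector with eigenvalue $\mu$, $|\mu|<1$, so that $r(A|_M)=|\mu|<1$.
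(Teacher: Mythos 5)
Your proof is correct and follows essentially the same route as the paper: both express the $k$-th iterate of $C_\phi\circ D_v$ at the fixed point as a derivative of $f$ in the directions $v,Av,\dots,A^{k-1}v$, bound it with Cauchy estimates so that the hypothesis forces the orbit evaluated at the fixed point to stay bounded, and deduce the second assertion from a geometric decay of $\|A^iv\|$ that beats $k!$. The only differences are minor and harmless: your auxiliary function $g(s_1,\dots,s_n)$ with the adjustable radius $R>e$ (absorbing the factor $n^n/n!\le e^n$) replaces the paper's direct bound $k!\prod_{i=0}^{k-1}\|A^iv\|\sup_{|z-z_0|<1}|f|$ on the symmetric form $(d^kf)^\vee(z_0)$, and in the last part you apply Gelfand's spectral radius formula to $A|_M$ where the paper passes to the Jordan form and estimates $\|J^iw\|\le cr^{i-N}i^N\|w\|$.
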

\begin{proof}
We denote by $d^kf(z)$ to the $k$-th differential of a function $f$ at $z$, which is a $k$-homogenous polynomial, and we denote by $\big(d^kf\big)^\vee(z)$ to the associated symmetric $k$-linear form.

It is not difficult to see that the orbits of the operator $C_\phi\circ D_v$ are determined by
$$
(C_\phi\circ D_v)^kf (z)= \big(d^kf \big)^\vee(\phi^kz)(v,Av,\dots,A^{k-1}v).
$$
Assume that $z_0$ is a fixed point of $\phi$, then applying the Cauchy's inequalities we get
\begin{align*}
|(C_\phi\circ D_v)^kf (z_0)|&= |\big(d^kf\big)^\vee(\phi^kz_0)(v,Av,\dots,A^{k-1}v)| = |\big(d^kf\big)^\vee(z_0)(v,Av,\dots,A^{k-1}v)|\\
&\leq k! \prod_{i=0}^{k-1} \|A^i v\| \sup_{|z-z_0|<1}|f(z)|.
\end{align*}

Therefore $\{(C_\phi\circ D_v)^k f (z_0)\}$ is a bounded set of $\zC$. Since the evaluation at $z_0$ is continuous, $C_\phi\circ D_v$ cannot have dense orbits.

For the last assertion, first note that if $J=Q^{-1}AQ$ is the Jordan form of $A$, we have that $w=Q^{-1}v$ belongs to the invariant subspace $Q^{-1}M$ of $J$ and that $r:=r(J|_{Q^{-1}M})<1$. By Remarks \ref{conjugar.la.matriz} and \ref{conjugar.el.b} it suffices to prove that $C_J\circ D_w$ is not hypercyclic.

It is not difficult to show that for every $i\ge N$,
$$
\|J^iw\|\le cr^{i-N}i^N\|w\|,
$$
where $c$ is a constant that depends only on $r$ and $N$. Therefore,
\begin{align*}
 k! \prod_{i=0}^{k-1} \|J^i w\|  &\le  k! \prod_{i=0}^{N-1}\|J^i w\| \prod_{i=N}^{k-1}cr^{i-N}i^N\|w\| \\
&\le  (k!)^{N+1}\|J\|^{(N+1)N/2}c^{k-N}\|w\|^kr^{(k-N)(k-N-1)/2} \to 0,
\end{align*}
which implies that $C_J\circ D_w$ is not hypercyclic by the first part of the proposition.
\end{proof}

In opposition to the previous result, if the matrix $A$ is expansive when restricted to an invariant subspace then the operator is strongly mixing in the Gaussian sense. This assumption is similar to the hypothesis of the results in the previous sections.
Indeed, in the one dimensional case we have that $\phi(z)=\lambda z + b$ and if $|\lambda|\geq 1$, then the operator $C_\phi \circ D$ is strongly mixing in the Gaussian sense. Here, the linear part of the composition operator is expansive. This situation still holds in the diagonal case in $H(\C^N)$. In this last case, we have that $\phi(z_1,\dots,z_N)=(\lambda_1 z_1 + b_1,\dots,\lambda_N z_N + b_N)$. Suppose that $\alpha$ is a multi-index of modulus one, i.e. that $D^\alpha$ is a partial derivative, then the hypothesis $|\lambda^\alpha|\geq 1$ turns out to be exactly the same as imposing that the linear part of $\phi$ is expansive on the subspace spanned by $\alpha$. The proper result reads as follows.

\begin{proposition}
Let $A \in \C^{N\times N}$ be an invertible matrix and let $v\neq 0$ be a vector in $\C^N$. Suppose that the affine linear map $\phi(z)=Az+b$ has a fixed point and that $v$ belongs to a subspace $M$ that reduces $A$ and such that $\|(A|_M)^{-1}\|<1$. Then the operator $C_\phi\circ D_v$ acting on $H(\C^N)$ is strongly mixing in the Gaussian sense.
\end{proposition}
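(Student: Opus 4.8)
The plan is to reduce to a normal form and then apply the Bayart--Matheron theorem (Theorem \ref{Bayart_Matheron}) by exhibiting enough unimodular eigenvectors. First, by Remarks \ref{conjugar.la.matriz} and \ref{conjugar.el.b}, since $\phi$ has a fixed point we may assume $b=0$ and that $A=J$ is in Jordan form; moreover the hypothesis that $M$ reduces $A$ with $\|(A|_M)^{-1}\|<1$ passes to the conjugated data $w=Q^{-1}v$ and $Q^{-1}M$. After a further linear change of coordinates splitting $\C^N = M \oplus M'$, we may write $J = J_M \oplus J_{M'}$ with $\|J_M^{-1}\|<1$, and $w\in M$. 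Thus it suffices to treat $T f(z) = D_w f(Jz)$ with these properties.

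Next I would produce eigenfunctions. The natural candidates are the functions $e_\gamma(z) = e^{\langle \gamma, z\rangle}$ for $\gamma \in \C^N$: one computes $D_w e_\gamma = \langle \gamma, w\rangle\, e_\gamma$ and $C_J e_\gamma(z) = e^{\langle \gamma, Jz\rangle} = e^{\langle J^t\gamma, z\rangle} = e_{J^t \gamma}(z)$, so $T e_\gamma = \langle \gamma,w\rangle\, e_{J^t\gamma}$. Hence $e_\gamma$ is not by itself an eigenfunction, but if $\gamma$ is a fixed point of $J^t$ (equivalently $\gamma \in \ker(J^t - I)$) then $T e_\gamma = \langle \gamma, w\rangle e_\gamma$. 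This is too small a supply. The better device, mirroring Lemma \ref{nonconvolution CN lema2}, is to look for eigenfunctions of the form $g(z) = P(z) e_\gamma(z)$, or to exploit the expansiveness of $J_M$: restricting attention to the variables in $M$, the operator $T$ behaves on functions depending only on those variables like a directional-derivative-plus-expanding-linear-map operator, and the one-variable mechanism from Section 2 can be run. Concretely, after passing to the eigenbasis of $J_M^t$ (or, in the non-diagonalizable case, handling the Jordan block as an extra perturbation), I would fix an eigenvalue $\mu$ of $J_M^t$ with $|\mu|>1$ (which exists since $\|J_M^{-1}\|<1$ forces the spectral radius of $J_M^{-1}$, hence all eigenvalues of $J_M$, to have modulus $>1$) and a corresponding eigenvector direction; then the family $e_{t\xi}$ with $\xi$ ranging appropriately and $t\in\C$, combined with the scalar factor $\langle t\xi, w\rangle$ and the geometric action $t\mapsto \mu t$, yields eigenfunctions with a continuum of eigenvalues whose moduli sweep out $(0,\infty)$.

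The verification of the Bayart--Matheron hypothesis then follows the pattern of Lemma \ref{nonconvolution CN lema2}: for a set $D\subset\T$ with $\T\setminus D$ dense in $\T$, one shows the span of the eigenfunctions with eigenvalue in $\T\setminus D$ is dense in $H(\C^N)$. The key analytic input is that a nonconstant holomorphic function of the free parameter (here the map sending the parameter $t$, together with a choice of $\gamma$ on a suitable affine slice, to the eigenvalue $\langle\gamma,w\rangle \cdot(\text{scalar})$) takes values in $\T$ on a set with accumulation points, and then \cite[Proposition 2.4]{BonGro06} gives density of the corresponding exponentials $e_\gamma$ in $H(\C^j)$; multiplying by monomials in the remaining variables and using that polynomials times exponentials are dense in $H(\C^N)$ upgrades this to density in the full space.

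The main obstacle I expect is the non-diagonalizable case: when $J_M$ has a nontrivial Jordan block, the pure exponentials $e_\gamma$ are no longer carried to scalar multiples of themselves by $C_J$ in a way that closes up, and one must either work with finite-dimensional $C_J$-invariant spaces of functions of the form $(\text{polynomial})\cdot e_\gamma$ and diagonalize $T$ there, or first perturb $J$ to a diagonalizable expansive matrix and argue by a limiting/approximation argument within the Bayart--Matheron framework. Handling this block structure carefully — while keeping track of how $D_w$ interacts with the polynomial factors — is where the real work lies; everything else is bookkeeping analogous to Lemmas \ref{nonconvolution CN lema1} and \ref{nonconvolution CN lema2}.
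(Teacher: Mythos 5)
There is a genuine gap at the heart of your plan: the spanning family of unimodular eigenvectors needed for Theorem \ref{Bayart_Matheron} is never actually produced, and the candidates you point to do not work. Your own computation shows $T e_{t\xi} = t\langle \xi,w\rangle\, e_{\mu t\xi}$ when $J^t\xi=\mu\xi$; since the differentiation direction $v$ lies in the \emph{expansive} part $M$, every relevant $\mu$ has $|\mu|>1$, so the orbit $t,\mu t,\mu^2t,\dots$ escapes to infinity and no exponential $e_{t\xi}$ (nor any function of the form $P(z)e_{t\xi}$, since $C_J$ keeps moving the frequency) is an eigenfunction. This is why the correct model from Section 3 is Lemma \ref{nonconvolution CN lema1}, not Lemma \ref{nonconvolution CN lema2}: in Lemma \ref{nonconvolution CN lema2} the differentiation acts only in the translation coordinates, where exponentials really are fixed up to scalars by the composition; here $T$ acts on the exponentials (or on monomial blocks) as a weighted shift along an escaping orbit, and eigenvectors can only be obtained by summing along that orbit, i.e.\ by series of the type $\sum_n \lambda^{n}T^n x+\sum_n\lambda^{-n}S_n x$, which requires exactly the right-inverse maps and decay estimates your proposal never constructs. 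The sentence ``yields eigenfunctions with a continuum of eigenvalues whose moduli sweep out $(0,\infty)$'' is therefore unsupported, and the Jordan-block issue you flag at the end is secondary to this more basic obstruction, which is already present when $A|_M$ is diagonalizable (even in dimension one, for $Tf(z)=f'(\lambda z)$ with $|\lambda|>1$).

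For comparison, the paper does not argue via eigenvectors at all: it applies the Murillo-Arcila--Peris criterion (Theorem \ref{murillo}) with $X_0$ the polynomials ($\sum_n T^n f$ is a finite sum), and builds an explicit right inverse $S=C\circ R$, where $R$ integrates $g$ along the direction $v$ from the fixed point and $C$ inverts the composition on the complementary part; the hypothesis $\sigma:=\|(A|_M)^{-1}\|<1$ then yields $\sup_{B(\pi_1z_0,r)\times B(\pi_2z_0,s)}|S^ng|\le \frac{r^n}{\|v\|^{2n}}\sigma^{n(n-1)/2}\sup|g|$ over shrinking polydiscs, giving the unconditional convergence of $\sum_n S^ng$. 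If you want to salvage the Bayart--Matheron route, you would in effect have to first prove these same quantitative right-inverse estimates and then assemble the eigenvector fields from them, so the missing analytic content of your proposal coincides with the actual work done in the paper. Your preliminary reduction via Remarks \ref{conjugar.la.matriz} and \ref{conjugar.el.b} is fine, but by itself it does not advance the argument.
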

\begin{proof}

%
We will show that the hypothesis of the Theorem \ref{murillo} are fulfilled, taking as dense sets the polynomials in $N$ complex variables. It is clear that $\sum_n T^nf$ converges unconditionally for every polynomial $f$. Now we will define a right inverse for $C_\phi\circ D_v$, but first we set some notation. Let us denote the fixed point of $\phi$ by $z_0$.
Let us denote by $\pi_1$ to the orthogonal projection over $M$,  $\pi_2=I-\pi_1$ the orthogonal projection over $M^\perp$. Set $\mu(z)= \frac{\langle z,v\rangle}{\|v\|^2}$. We have that $z\mapsto\mu(z)v$ is the orthogonal projection over $span\{v\}$, and we denote $\tilde\pi= \pi_1 - \mu(z)v$. Finally, set $\phi_i(z)=Az + \pi_i(b)$, for $i=1,\, 2$. Since, $M$ reduces $A$, we have that $\phi_i$ is invertible and that $\pi_i(z_0)$ is a fixed point of $\phi_i$, for $i=1,\, 2$.

We define now for each $g\in H(\C^N)$,
$$
Rg(z)=\int_{\mu(z_0)}^{\mu(z)} g(\phi_1^{-1}(tv+\tilde\pi (z))+\pi_2 (z)) dt,
$$
and $C(g)(z)=g(\pi_1(z)+\phi_2^{-1}(\pi_2(z)))$. Note that $R\circ C=C\circ R$. Finally, let $S=C\circ R$. Observe that,
$$
Sg(z)=\int_{\mu(z_0)}^{\mu(z)} g(\phi^{-1}(tv+\tilde\pi (z)+\pi_2 (z))) dt.
$$
We have that
\begin{align*}
D_v Sg(z) &= \lim_{s\to 0} \frac{Sg(z+sv)-Sg(z)}{s}\\
&= \lim_{s\to 0} \frac{1}{s}\left[\int_{\mu(z_0)}^{\mu(z+sv)} g(\phi^{-1}(tv+\tilde\pi (z)+\pi_2(z))) dt - \int_{\mu(z_0)}^{\mu(z)} g(\phi^{-1}(tv+\tilde\pi (z)+\pi_2(z)))  dt \right]\\
&= \lim_{s\to 0} \frac{1}{s} \int_{\mu(z)}^{\mu(z)+s} g(\phi^{-1}(tv+\tilde\pi (z)+\pi_2(z)))  dt\\
&= g(\phi^{-1}(\mu(z)v +\tilde\pi (z)+\pi_2(z))) \\
&= g(\phi^{-1}z).
\end{align*}
Thus, $[C_\phi\circ D_v]\circ S g = g$ for every $g\in H(\C^N)$.
To conclude the proof we need to show that $\sum_n S^ng$ converges unconditionally for every polynomial $g$.

First we will bound the supremum of $|Rg|$ on $B(\pi_1 z_0,r)\times B(\pi_2 z_0, s)$, for a fixed polynomial $g$. Suppose that $z\in B(\pi_1 z_0,r)\times B(\pi_2 z_0, s)$ and that $t\in [\mu(z_0),\mu(z)]$ i.e. $t$ lives in the complex segment from $\mu(z_0)$ to $\mu(z)$. Then we have that
\begin{align*}
\|tv+\tilde\pi(z)-\pi_1 z_0\|^2 &= \|(t-\mu(z_0))v+\tilde\pi (z-z_0)\|^2 \\ &= |t- \mu(z_0)|^2 \|v\|^2 + \|\tilde\pi (z-z_0)\|^2 \\ &\leq |\mu(z)- \mu(z_0)|^2 \|v\|^2 + \|\tilde\pi (z-z_0)\|^2 + \\ &= \|\pi_1(z-z_0)\|^2 < r^2.
\end{align*}
Also, suppose that $\sigma:=\|(A|_M)^{-1}\|<1$. We get that
\begin{align*}
\|\phi_1^{-1}(\pi_1 (z))- \pi_1 (z_0)\| &=\|\phi_1^{-1}(\pi_1 (z))- \phi_1^{-1}(\pi_1 (z_0))\| \\ &= \|A^{-1}(\pi_1 (z)-\pi_1 (b))- A^{-1}(\pi_1 (z_0)- \pi_1 (b))\| \\ &\leq \left\|(A|_M)^{-1}\right\| \|\pi_1(z)-\pi_1 (z_0)\| = \sigma r.
\end{align*}

Gathering the previous statements we get that
\begin{align*}
|Rg(z)| &\leq |\mu(z)-\mu(z_0)| \sup_{t\in [\mu(z_0),\mu(z)]} |g(\phi_1^{-1}(tv+\tilde\pi (z))+\pi_2 (z))| \\
&\leq \frac{r}{\|v\|^2} \sup_{w\in B(\pi_1 z_0,r)\times B(\pi_2 z_0, s)} |g(\phi_1^{-1}(\pi_1 (w)) + \pi_2(w))| \leq
\frac{r}{\|v\|^2} \sup_{w\in B(\pi_1 z_0,\sigma r)\times B(\pi_2 z_0, s)} |g(w)|.
\end{align*}

Thus, we have proved that
$$
\sup_{B(\pi_1 z_0,r)\times B(\pi_2 z_0, s)}|Rg| \leq \frac{r}{\|v\|^2} \sup_{B(\pi_1 z_0,\sigma r)\times B(\pi_2 z_0, s)} |g|.
$$

Following by induction we obtain that
\begin{align*}
\sup_{B(\pi_1 z_0,r)\times B(\pi_2 z_0, s)}|R^ng| &\leq \frac{r}{\|v\|^2} \sup_{B(\pi_1 z_0,\sigma r)\times B(\pi_2 z_0, s)} |R^{n-1}g| \\ &\leq \frac{r^n}{\|v\|^{2n}} \sigma^{\frac{n(n-1)}{2}} \sup_{B(\pi_1 z_0,\sigma^n r)\times B(\pi_2 z_0, s)} |g| .
\end{align*}

Finally, to conclude the proof we compute $\sup_{B(\pi_1 z_0,r)\times B(\pi_2 z_0, s)}|S^n g(z)|$:

\begin{align*}
 \sup_{B(\pi_1 z_0,r)\times B(\pi_2 z_0, s)}|S^n g(z)| & = \sup_{B(\pi_1 z_0,r)\times B(\pi_2 z_0, s)}|R^n C^ng(z)| \\
 & \leq \frac{r^n}{\|v\|^{2n}} \sigma^{\frac{n(n-1)}{2}} \sup_{B(\pi_1 z_0,\sigma^n r)\times B(\pi_2 z_0, s)}|C^n g(z)| \\
 & \leq \frac{r^n}{\|v\|^{2n}} \sigma^{\frac{n(n-1)}{2}} \sup_{B(\pi_1 z_0,\sigma^n r)\times \phi_2^{-n} (B(\pi_2 z_0, s))}|g(z)|
\end{align*}

Since $\sigma<1$, we have proved that $\sum_n S^ng$ converges unconditionally for every polynomial $g$. Hence the operator $C_\phi\circ D_v$ is strongly mixing in the Gaussian sense, as we wanted to prove.
\end{proof}

We turn now our discussion to the cases in which the affine linear map $\phi(z)=Az+b$ does not have a fixed point. This is equivalent to say that $b\notin Ran(I-A)$.
Thus, $1$ belongs to the spectrum of $A$. Then the Jordan form of $A$, which we denote by $J$, has a sub-block with ones in the principal diagonal and the first sub-diagonal and zeros elsewhere. It is easy to see that there exists
some $k\in\zN$, $k\leq N$ such that the canonical vector $e_k$ does not belong to $Ran(I-J)$ and such that
$b_k\not = 0$. This argument will be the key to show that $\phi$ is a runaway map, hence the operator $C_\phi
\circ D_v$ is topologically transitive. The proof of this result is in the spirit of part $b)$ of Theorem
\ref{nonconvolution CN}.

\begin{proposition}
Let $A \in \C^{N\times N}$ be an invertible matrix and let $v\neq 0$ be a vector in $\C^N$. Suppose that the affine linear map $\phi(z)=Az+b$ does not have a fixed point. Then the operator $C_\phi\circ D_v$ acting on $H(\C^N)$ is mixing.
\end{proposition}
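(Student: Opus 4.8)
The plan is to establish the mixing property directly, by the Runge-approximation scheme used in part $b)$ of Theorem \ref{nonconvolution CN}. First I would normalise the matrix: by Remark \ref{conjugar.la.matriz} the operator $C_\phi\circ D_v$ is linearly conjugate, through the linear homeomorphism $Q^*$ of $H(\C^N)$, to $C_\psi\circ D_w$ with $\psi(z)=Jz+c$, where $J$ is the Jordan form of $A$, $c=Q^{-1}b$ and $w=Q^{-1}v$; since conjugacy by a homeomorphism preserves the mixing property and $\psi$ still has no fixed point, I may assume from now on that $\phi(z)=Jz+c$. As observed in the discussion preceding the statement, having no fixed point means $c\notin\mathrm{Ran}(I-J)$, so there is an index $k\le N$ with $e_k\notin\mathrm{Ran}(I-J)$ and $c_k\neq0$. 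Such a $k$ is necessarily a coordinate on which $J$ acts as the identity, $(Jz)_k=z_k$, so $\phi(z)_k=z_k+c_k$ is an honest translation; this coordinate takes over the role of the translated coordinate $l$ from part $b)$.

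Next I would set up the open sets. Given nonempty open $U,V\subset H(\C^N)$, pick $f\in U$ and — using density of the polynomials — a polynomial $g\in V$; then, enlarging the relevant compact set to a closed polydisc $L=\prod_{i=1}^N B(0,r)$ and shrinking $\theta>0$, I may assume $\{h:\|f-h\|_{\infty,L}<\theta\}\subseteq U$ and $\{h:\|g-h\|_{\infty,L}<\theta\}\subseteq V$. Since $T=C_\phi\circ D_v$, I define a right inverse on polynomials by $S=I_v\circ C_{\phi^{-1}}$, where $I_v$ is antidifferentiation in the direction $v$ (fix a complement $H$ of $\C v$ and put $I_vh(z_H+sv)=\int_0^s h(z_H+tv)\,dt$). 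One checks $D_v\circ I_v=\mathrm{Id}$, hence $T\circ S=\mathrm{Id}$ on polynomials; moreover $S$ sends polynomials to polynomials, so $S^ng$ is a polynomial and $g-T^nP=T^n(S^ng-P)$ for every polynomial $P$.

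The heart of the argument is the Cauchy iteration. Fix $\varepsilon>0$ and set $\varepsilon_j=\varepsilon\,2^{-j}$. Applying the one-variable Cauchy estimate to $s\mapsto\psi(z_0+sv)$ gives $\|D_v\psi\|_{\infty,E}\le(\|v\|/\varepsilon_j)\|\psi\|_{\infty,E^{(\varepsilon_j)}}$, where $E^{(\varepsilon_j)}$ is the closed $\varepsilon_j$-neighbourhood of $E$. Peeling the $n$ copies of $T=C_\phi\circ D_v$ off the polynomial $S^ng-P$ one at a time — each peeling replacing the current domain by its $\phi$-image and then applying the estimate above — I obtain, exactly as in part $b)$,
\begin{equation*}
\|g-T^nP\|_{\infty,L}\ \le\ K_n\,\|S^ng-P\|_{\infty,\Lambda_n},\qquad K_n=\prod_{j=1}^n\frac{\|v\|}{\varepsilon_j}=\frac{\|v\|^n\,2^{n(n+1)/2}}{\varepsilon^n},
\end{equation*}
where $\Lambda_0=L$ and $\Lambda_j=\big(\phi(\Lambda_{j-1})\big)^{(\varepsilon_j)}$; each $\Lambda_n$ is convex and compact, being an affine image of a polydisc thickened by a ball. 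The crucial point is that one needs to control only the $k$-th coordinate: there $\phi$ merely translates by $c_k$ and the successive thickenings add at most $\sum_j\varepsilon_j=\varepsilon$ to the radius, so the $k$-th coordinate of $\Lambda_n$ lies in $B(nc_k,\,r+\varepsilon)$, which is disjoint from the $k$-th coordinate $B(0,r)$ of $L$ once $n|c_k|>2r+\varepsilon$. Thus there is $n_0$ with $\Lambda_n\cap L=\emptyset$ for all $n\ge n_0$, and for such $n$ Theorem \ref{Runge} applied to the function equal to $f$ near $L$ and to $S^ng$ near $\Lambda_n$ yields a polynomial $P_n$ with $\|f-P_n\|_{\infty,L}<\theta$ and $\|S^ng-P_n\|_{\infty,\Lambda_n}<\theta/K_n$. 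Then $\|g-T^nP_n\|_{\infty,L}<\theta$, so $P_n\in U\cap T^{-n}V$, which gives $T^n(U)\cap V\neq\emptyset$ for every $n\ge n_0$; that is, $T$ is mixing.

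The only genuinely new ingredient compared with part $b)$, and the point I expect to require the most care, is the bookkeeping in the iteration: since $\phi$ is no longer diagonal, the iterated images $\phi^j(L)$ are tilted ellipsoids whose radii in the non-translated coordinates may grow like $\|A\|^j$, and one must verify that this growth is harmless (disjointness of $\Lambda_n$ and $L$ in the single translated coordinate already suffices for Theorem \ref{Runge}) while checking that the $k$-th coordinate itself stays bounded — which it does precisely because $\phi$ acts there as a pure translation. All of the conceptual work is in the reduction to Jordan form and the extraction of that translated coordinate, both of which are already prepared in the discussion preceding the statement.
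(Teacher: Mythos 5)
Your proposal is correct and follows essentially the same route as the paper's proof: reduction to Jordan form, extraction of a coordinate in which $\psi$ acts as a pure translation (coming from $b\notin \mathrm{Ran}(I-J)$), iterated directional Cauchy estimates against a right inverse of $T$, and Runge's Theorem \ref{Runge} applied to the polydisc $L$ and the convex compact sets $\Lambda_n$, which drift away from $L$ in that translated coordinate. The only differences are cosmetic (your one-variable Cauchy constant $\|v\|/\varepsilon_j$ and recursive bookkeeping of $\Lambda_n$ versus the paper's explicit sum $A_l$), and your observation that disjointness in the single translated coordinate suffices is exactly the point the paper uses.
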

\begin{proof}
Due to the previous observations it is enough to prove that $C_\psi\circ D_w$ is topologically transitive if $\psi(z)=Jz+b$ with $b\notin Ran (I-J)$ and $w\in\zC^N$, $w\not =0$. We will denote $T=C_\psi\circ D_w$.

Given $K_U$, $K_V$ two compact sets of $\zC^N$, $h_U$, $h_V$ two holomorphic functions in $H(\zC^N)$ and $\theta$ a positive real number, we want to prove that there exists $k\in\zN$ and $g\in H(\zC^N)$ such that

\begin{equation}\label{T cumple Runge}
\|g-h_U\|_{K_U}<\theta \; \textrm{  and  } \; \|(C_\psi\circ D_w)^kg-h_V\|_{K_V}<\theta.
\end{equation}

We will use Runge's theorem to show the existence of such function $g$. As before, we denote by $S$ the right inverse of $D_w$. We have that
\begin{align*}
\sup_{K_V} \left|C_\psi\circ D_w g(z)-h_V(z)\right| &= \sup_{K_V} \left|C_\psi \left(D_w g(z)- C_{\psi^{-1}} h_V(z)\right)\right| \\
&= \sup_{C_\psi (K_V)} \left|D_w g(z)- C_{\psi^{-1}} h_V(z))\right| \\
& =  \sup_{J(K_V) + b } \left|D_w \left(g(z)- S\circ C_{\psi^{-1}} h_V(z)\right)\right| \\
& \leq \frac{\|w\| N}{\varepsilon_1^N} \sup_{J(K_V) + B_{\varepsilon_1}(b) } \left|g(z)- S\circ C_{\psi^{-1}} h_V(z)\right| .
\end{align*}

Following in this way inductively, we will get an estimate of $\|(C_\psi\circ D_w)^kg-h_V\|_{K_V}$,

$$
\sup_{K_V} \left|(C_\psi\circ D_w)^l g(z)-h_V(z)\right| \leq \alpha(l) \sup_{A_l} \left|g(z)- (S\circ C_{\psi^{-1}})^l h_V(z)\right|,
$$

with $\alpha(l)>0$ and $A_l= J^l(K_V)+ \sum_{i=1}^l J^i(B(0,\varepsilon_i)) + \sum_{i=1}^l J^i(b)$.

It is enough to find some $l\in\zN$ such that $K_U\cap A_l= \emptyset$. Without loss of generality we can assume that $e_1\notin Ran(J-I)$ and $b_1\not = 0$ (see the comments before the proposition). This means that $J$ acts like the identity in the first coordinate.

Suppose that $K_V\subset \prod_{i=1}^{N}B(0,r_i)$, then if we project in the first coordinate and choose proper $\varepsilon_i>0$ we obtain

\begin{align*}
[A_l]_1 &= [J^l(K_V)]_1+ \sum_{i=1}^l [J^i(B(0,\varepsilon_i))]_1 + \sum_{i=1}^l [J^i(b)]_1 \\
&\subset B(0,r_1) + B(0, \sum_{i=1}^l \varepsilon_i) + lb_1 \\
&\subset B(0, R) + lb_1.
\end{align*}

Thus, we will able to find $l_0\in\zN$ such that $[K_U]_1\cap [A_l]_1= \emptyset$ for all $l\geq l_0$. Therefore, by Runge's Theorem, there exists some $g_l\in H(\zC^N)$ such that (\ref{T cumple Runge}) is satisfied for all $l\geq l_0$. We have proved that the operator $C_\psi\circ D_w$ is mixing, as we wanted to prove.

\end{proof}

\bibliography{biblio}

\begin{thebibliography}{10}

\bibitem{AroMar04}
Richard~M. Aron and Dinesh Markose.
\newblock On universal functions.
\newblock {\em J. Korean Math. Soc.}, 41(1):65--76, 2004.
\newblock Satellite Conference on Infinite Dimensional Function Theory.

\bibitem{BayMatSMALL}
Fr\'ed\'eric Bayart and \'Etienne Matheron.
\newblock Mixing operators and small subsets of the circle.
\newblock {\em Preprint}, 2011.

\bibitem{Ber05}
Luis Bernal-Gonz{\'a}lez.
\newblock Universal entire functions for affine endomorphisms of cn.
\newblock {\em Journal of mathematical analysis and applications},
  305(2):690--697, 2005.

\bibitem{BerMon95}
Luis Bernal~Gonz{\'a}lez and Alfonso Montes-Rodr{\'{\i}}guez.
\newblock Universal functions for composition operators.
\newblock {\em Complex Variables Theory Appl.}, 27(1):47--56, 1995.

\bibitem{Bir29}
George~D. Birkhoff.
\newblock {D\'emonstration d'un th\'eor\`eme \'el\'ementaire sur les fonctions
  enti\`eres.}
\newblock {\em C. R.}, 189:473--475, 1929.

\bibitem{BonGro06}
Antonio Bonilla and Karl-G. Grosse-Erdmann.
\newblock On a theorem of {G}odefroy and {S}hapiro.
\newblock {\em Integral Equations Operator Theory}, 56(2):151--162, 2006.

\bibitem{BonGro07}
Antonio Bonilla and Karl-G. Grosse-Erdmann.
\newblock Frequently hypercyclic operators and vectors.
\newblock {\em Ergodic Theory Dynam. Systems}, 27(2):383--404, 2007.

\bibitem{FerHal05}
Gustavo Fern\'andez and Andr\'e~Arbex Hallack.
\newblock {Remarks on a result about hypercyclic non-convolution operators.}
\newblock {\em J. Math. Anal. Appl.}, 309(1):52--55, 2005.

\bibitem{GodSha91}
Gilles Godefroy and Joel~H. Shapiro.
\newblock Operators with dense, invariant, cyclic vector manifolds.
\newblock {\em J. Funct. Anal.}, 98(2):229--269, 1991.

\bibitem{GroPer11}
Karl-G. Grosse-Erdmann and Alfred Peris~Manguillot.
\newblock {\em {Linear chaos.}}
\newblock {Universitext. Berlin: Springer. xii, 386~p. EUR~53.45 }, 2011.

\bibitem{GupMun14}
Manjul Gupta and Aneesh Mundayadan.
\newblock q-frequently hypercyclic operators.
\newblock {\em Banach J. Math. Anal.}, to appear.

\bibitem{Mac52}
Gerald~R. MacLane.
\newblock Sequences of derivatives and normal families.
\newblock {\em J. Analyse Math.}, 2:72--87, 1952.

\bibitem{MurPer13}
Marina Murillo-Arcila and Alfred Peris.
\newblock {Strong mixing measures for linear operators and frequent
  hypercyclicity.}
\newblock {\em J. Math. Anal. Appl.}, 398(2):462--465, 2013.

\bibitem{MurPinSav14}
Santiago Muro, Dami\'an Pinasco, and Mart\'in Savransky.
\newblock Strongly mixing convolution operators on {F}r\'echet spaces of
  holomorphic functions.
\newblock {\em Integral Equations Operator Theory}, to appear.

\end{thebibliography}
\bibliographystyle{plain}

\end{document}